\newcommand{\ZZ}{\mathbb{Z}}			
\newcommand{\NN}{\mathbb{N}}			
\newcommand{\isdef}{\coloneqq}			
\DeclarePairedDelimiter\abs{\lvert}{\rvert}		
\DeclarePairedDelimiter\norm{\lVert}{\rVert}	
\newcommand{\PP}{\mathbb{P}}
\newcommand\given[1][]{\mathop{#1\vert}}					
\newcommand{\indicator}[1]{\mathbbm{1}_{#1}}			
\newcommand{\TV}{\mathrm{TV}}			
\newcommand{\symb}[1]{\mathtt{#1}}         
\newcommand{\config}[1]{\underline{#1}} 
\DeclareMathOperator{\maj}{maj}
\newtheorem{theorem}{Theorem} 
\newtheorem{knowntheorem}{Theorem}
\newtheorem{lemma}{Lemma}
\newtheorem{knownlemma}{Lemma}
\theoremstyle{definition}
\newtheorem{definition}{Definition}
\newtheorem{remark}{Remark} 
\newtheorem{example}{Example} 
\newtheorem*{claim*}{Claim}
\NewDocumentEnvironment{corollary}{D<>{}}{
    \if\relax\detokenize{#1}\relax  
    \else
        \ifcsname #1-used\endcsname
            \expandafter\xdef\csname #1-used\endcsname{\the\numexpr\csname #1-used\endcsname+1}%
        \else
            \expandafter\gdef\csname #1-used\endcsname{1}%
        \fi
        \renewcommand{\theprecorollary}{\ref{#1}.\csname #1-used\endcsname}%
    \fi
    \precorollary
}{\endprecorollary}
\newenvironment{claimproof}[1][]{%
    \begin{proof}[\ifthenelse{\equal{#1}{}}{Proof of the claim}{#1}]
    
}{%
    \end{proof}
}
\DeclareAcronym{iid}{%
    short=i.i.d\acdot,
    long=independent and identically distributed,
    first-style=short
}
\DeclareAcronym{ie}{%
    short=i.e\acdot,
    long=that is,
    first-style=short
}
\DeclareAcronym{eg}{%
    short=e.g\acdot,
    long=for example,
    first-style=short
}
\DeclareAcronym{aka}{%
    short=a.k.a\acdot,
    long=also known as,
    first-style=short
}
\newcommand*\samethanks[1][\value{footnote}]{\footnotemark[#1]}
\tikzstyle{site}=[inner sep=0pt,thick,circle,draw=black,fill=black,minimum size=3pt]
\newlength{\graphunit}
\newcommand{\graphG}[5]{
    \begin{scope}[xshift=#1,yshift=#2,xscale=#3,yscale=#4]
        \node[site] (#5a) at (-1\graphunit,0) {};
        \node[site] (#5b) at (+1\graphunit,0) {};
        \draw (#5a) -- (#5b);
    \end{scope}
}
\newcommand{\graphGG}[5]{
    \begin{scope}[xshift=#1,yshift=#2,xscale=#3,yscale=#4]
        \graphG{0}{2\graphunit}{1}{1}{#5a}
        \graphG{0}{-2\graphunit}{1}{-1}{#5b}
        \node[site] (#5c) at (0,0) {};
        
        \draw (#5aa) to[bend right=15] (#5ba);
        \foreach \X in {#5aa,#5ab,#5ba,#5bb} {
            \draw (#5c) -- (\X);
        }
    \end{scope}
}
\newcommand{\graphGGG}[5]{
    \begin{scope}[xshift=#1,yshift=#2,xscale=#3,yscale=#4]
        \graphGG{-4\graphunit}{0}{1}{1}{#5a}
        \graphGG{4\graphunit}{0}{-1}{1}{#5b}
        \node[site] (#5d) at (0,0.5\graphunit) {};
        \node[site] (#5e) at (0,-1.5\graphunit) {};

        \draw (#5d) -- (#5e);
        \draw (#5aaa) to[bend left] (#5bab);
        \draw (#5aab) to[bend left] (#5baa);
        \foreach \X in {#5aaa,#5aab,#5aba,#5abb,#5baa,#5bab,#5bba,#5bbb,#5ac,#5bc} {
            \draw (#5d) -- (\X);
        }
    \end{scope}
}
\newcommand{\graphGGGG}[5]{
    \begin{scope}[xshift=#1,yshift=#2,xscale=#3,yscale=#4]
        \graphGGG{0}{+3\graphunit}{1}{1}{#5a}
        \graphGGG{0}{-3\graphunit}{1}{-1}{#5b}

        \draw (#5aaaa) to[bend right] (#5baaa);
        \draw (#5ae) -- (#5bbbb);
        \draw (#5abbb) -- (#5be);
    \end{scope}
}
\newcommand{\graphGGGGG}[5]{
    \begin{scope}[xshift=#1,yshift=#2,xscale=#3,yscale=#4]
        \graphGGGG{-8\graphunit}{0}{1}{1}{#5a}
        \graphGGGG{+8\graphunit}{0}{-1}{1}{#5b}
        \begin{scope}[yshift=0.5\graphunit]
            \node[site] (#5f) at (90:1\graphunit) {};
            \node[site] (#5g) at (162:1\graphunit) {};
            \node[site] (#5h) at (234:1\graphunit) {};
            \node[site] (#5i) at (306:1\graphunit) {};
            \node[site] (#5j) at (18:1\graphunit) {};
        \end{scope}

        \draw (#5f) -- (#5g) -- (#5h) -- (#5i) -- (#5j) -- (#5f);
        \draw (#5aabab) -- (#5babab);
        \draw (#5abbc) -- (#5bbbaa);
        \draw (#5abbaa) -- (#5bbbc);
    \end{scope}
}
\tikzstyle{stateO}=[inner sep=0pt,circle,draw=black,fill=white,minimum size=4pt]
\tikzstyle{stateI}=[inner sep=0pt,circle,draw=black,fill=black,minimum size=4pt]
\def\rnconfigI{%
    {1, 1, 0, 1, 0, 1, 0, 0, 1, 1, 0, 1, 1, 1, 1, 1, 0, 0, 1, 1, 0, 0, 1, 0, 1, 0, 1, 1, 0, 0, 0, 0, 0, 0, 0, 1, 0, 1, 0, 0, 1, 0, 1, 0, 1, 0, 0, 0, 0, 1, 1, 0, 1, 1, 1, 0, 1, 0, 0, 0, 1, 0, 0, 0, 0, 1, 1, 0, 0, 0, 1, 1, 0, 0, 0, 0, 0, 0, 0, 0, 1}
}
\def\rnconfigII{%
    {1, 1, 0, 1, 1, 0, 1, 0, 1, 0, 0, 0, 0, 0, 1, 0, 1, 1, 1, 0, 0, 0, 0, 1, 0, 0, 0}
}
\def\rnconfigIII{%
    {1, 1, 1, 0, 0, 1, 0, 0, 0}
}
\def\rnconfigIV{%
    {1, 0, 0}
}
\newlength{\xplottick}
\title{%
    Zero-one laws for events with positional symmetries\thanks{%
        This research was conducted during the 2023 Summer Research Camp in Mathematics at the American University of Beirut (AUB).
		The authors thank the Center for Advanced Mathematical Sciences (CAMS) and the Department of Mathematics at AUB for their support.
		S.~Taati also gratefully acknowledges the support of CAMS through a CAMS Fellowship.
		CAMS ORCID: 0009-0004-5763-5004.
	}
	\footnotetext{Last update:~\today}
}
\author{%
	Yahya Ayach\thanks{%
		Department of Mathematics, American University of Beirut, Beirut, Lebanon.
	}
	\and
	Anthony Khairallah\samethanks[2]
	\and
	Tia Manoukian\samethanks[2]
	\and
	Jad Mchaimech\samethanks[2]
	\and
	Adam Salha\thanks{%
		Suliman S. Olayan School of Business (OSB), American University of Beirut, Beirut, Lebanon.
	}
	\and
	Siamak Taati\samethanks[2]~\thanks{%
		Center for Advanced Mathematical Sciences (CAMS), American University of Beirut, Beirut, Lebanon.
	}
}
\date{}
\begin{document}

\maketitle

\begin{abstract}
    We use an information-theoretic argument due to O'Connell (2000) to prove that every sufficiently symmetric event concerning a countably infinite family of independent and identically distributed random variables is deterministic (\ac{ie}, has a probability of either~$0$ or~$1$).  The \ac{iid} condition can be relaxed.
    This result encompasses the Hewitt-Savage zero-one law and the ergodicity of the Bernoulli process, but also applies to other scenarios such as infinite random graphs and simple renormalization processes.
    
\end{abstract}

\section{Introduction}
\label{sec:intro}

\subsection{Background and main results}
\label{sec:intro:results}

Much of probability theory concerns how deterministic phenomena emerge from randomness.
Zero-one laws refer to theorems asserting that the occurrence or non-occurrence of certain events are almost surely pre-determined without identifying whether they actually occur or not.
The most famous zero-one law is the Kolmogorov zero-one law, which states that every tail event concerning a sequence of independent random variables has a probability of either $0$ or~$1$~\cite[Appx.]{Kolmogorov1956}.
In the current paper, we generalize two other classic zero-one laws: the Hewitt-Savage zero-one law~\cite{HS1955}, and the ergodicity of the Bernoulli process (see e.g., the monograph by Walters~\cite[Thm.~1.12]{Walters1982}).

The latter two theorems establish the determinism of certain symmetric events concerning a sequence of independent and identically distributed (\ac{iid}) random variables.
Let $M$ be a measurable space where the random variables take their values.
An event $E\subseteq M^{\NN}$ is said to be \emph{exchangeable} if it is invariant under all finitary permutations $\pi\colon\NN\to\NN$ of the indices, that is, $\config{a}=(a_1,a_2,\ldots)\in E$ if and only if $\config{a}^\pi\isdef (a_{\pi(1)},a_{\pi(2)},\ldots)\in E$.  Here, being \emph{finitary} means that $\pi$ keeps all but finitely many elements unchanged.

\begin{knowntheorem}[Hewitt-Savage zero-one law]
\label{thm:01law:hewitt-savage}
	Let $\config{X}=(X_1,X_2,\ldots)$ be an infinite sequence of \ac{iid} random variables with values in a measurable space~$M$.
    Then, for every exchangeable event $E\subseteq M^\NN$, we have $\PP(\config{X}\in E)\in\{0,1\}$.
\end{knowntheorem}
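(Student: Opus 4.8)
The plan is to run the classical approximation argument. Fix an exchangeable event $E\subseteq M^\NN$ and set $p\isdef\PP(\config{X}\in E)$; the goal is to prove $p=p^2$, which forces $p\in\{0,1\}$. The engine of the proof is that, since finite-dimensional cylinder sets generate the product $\sigma$-algebra on $M^\NN$, the event $E$ can be approximated in probability by cylinders: for every $\epsilon>0$ there are an $n\in\NN$ and a measurable $B\subseteq M^n$ such that the cylinder $F\isdef\{\config{a}\in M^\NN:(a_1,\ldots,a_n)\in B\}$ satisfies $\PP(\config{X}\in E\symdiff F)<\epsilon$. This is a routine good-sets / monotone-class argument: the family of events admitting such approximations is closed under complements and countable unions and contains the algebra of all cylinders, hence is the entire product $\sigma$-algebra.

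Next I would exploit the symmetry. Let $\pi\colon\NN\to\NN$ be the finitary permutation swapping $i$ with $n+i$ for $1\le i\le n$ and fixing all other indices, and let $T_\pi\colon\config{a}\mapsto\config{a}^\pi$ be the induced coordinate map on $M^\NN$. Since $\config{X}$ is \ac{iid}, $T_\pi$ preserves the law of $\config{X}$; since $E$ is exchangeable, $T_\pi^{-1}(E)=E$. Hence $F^\pi\isdef T_\pi^{-1}(F)$ is a cylinder depending only on coordinates $n+1,\ldots,2n$, and, because preimages commute with symmetric differences,
$$\PP(\config{X}\in E\symdiff F^\pi)=\PP\bigl(\config{X}\in T_\pi^{-1}(E\symdiff F)\bigr)=\PP(\config{X}\in E\symdiff F)<\epsilon.$$
So $E$ is within $\epsilon$ (in probability) of both $F$ and $F^\pi$, and these two cylinders are \emph{independent}, as they involve disjoint blocks of the \ac{iid} sequence.

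Finally I would combine these facts numerically. From $E\symdiff(F\cap F^\pi)\subseteq(E\symdiff F)\cup(E\symdiff F^\pi)$ we get $\PP(\config{X}\in E\symdiff(F\cap F^\pi))<2\epsilon$, so $\abs{p-\PP(\config{X}\in F\cap F^\pi)}<2\epsilon$; by independence $\PP(\config{X}\in F\cap F^\pi)=\PP(\config{X}\in F)^2$; and $\abs{p-\PP(\config{X}\in F)}<\epsilon$, whence $\abs{p^2-\PP(\config{X}\in F)^2}\le 2\epsilon$. Chaining these gives $\abs{p-p^2}<4\epsilon$, and letting $\epsilon\downarrow 0$ yields $p=p^2$.

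The only delicate point is the cylinder-approximation lemma, and even that is standard; everything else is bookkeeping with symmetric differences and the measure-preservation of $T_\pi$. (An alternative route, which is the one the present paper generalizes, replaces this argument by an entropy computation: using exchangeability to equate the per-coordinate mutual informations $I(\indicator{E};X_i)$ and independence to bound their sum by $H(\indicator{E})\le\log 2$, one obtains $I(\indicator{E};X_1,\ldots,X_n)=0$ for every $n$; letting $n\to\infty$ shows $\indicator{E}$ is independent of the whole sequence, hence of itself, hence deterministic.)
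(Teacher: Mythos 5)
Your proof is correct, but it is not the route this paper takes for Theorem~\ref{thm:01law:hewitt-savage}. You give the classical measure-theoretic argument (the one found in Dudley or Klenke, which the paper cites precisely as the ``standard'' proof it departs from): approximate $E$ in probability by a cylinder $F$ over coordinates $1,\ldots,n$, use a finitary permutation $\pi$ swapping $\{1,\ldots,n\}$ with $\{n+1,\ldots,2n\}$ to produce an independent copy $F^\pi$ that approximates $E$ equally well, and conclude $\abs{p-p^2}<4\epsilon$. All the steps check out: $T_\pi$ preserves the law of $\config{X}$ by the \ac{iid} assumption, $T_\pi^{-1}(E)=E$ by exchangeability, preimages do commute with symmetric differences, and the bookkeeping $\PP\big(\config{X}\in E\symdiff(F\cap F^\pi)\big)<2\epsilon$ together with $\PP(\config{X}\in F\cap F^\pi)=\PP(\config{X}\in F)^2$ gives the conclusion. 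The paper instead proves this theorem via O'Connell's entropy argument (Section~\ref{sec:intro:oconnel}), which you correctly sketch in your closing parenthetical: exchangeability equates the mutual informations $I(Y:Z_i)$, independence bounds their sum by $H(Y)\leq\log 2$, so each is zero, and Lemma~\ref{lem:independence} upgrades this to independence of $Y$ from the whole sequence and hence from itself. The trade-off is this: your argument is elementary, self-contained, and needs only one well-chosen permutation per $\epsilon$; the information-theoretic argument needs an infinite sequence of symmetries with pairwise disjoint images but isolates exactly the two hypotheses (``disjoint images give independence'' and ``symmetry plus equidistribution give equal mutual informations'') that the paper then relaxes quantitatively in Theorem~\ref{thm:main:non-iid} via the approximate O'Connell inequality. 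Both proofs ultimately rest on the same structural fact --- a positional symmetry moving any finite set of coordinates off itself --- so your approach would also generalize to Theorem~\ref{thm:main:iid}, as the paper itself remarks, but the entropy route is the one the authors chose to develop.
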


For the other theorem, we state the version that concerns bi-infinite sequences of random variables.
An event $E\subseteq M^{\ZZ}$ is said to be \emph{shift-invariant} if $\config{a}=(\ldots,a_{-1},a_0,a_1,\ldots)\in E$ if and only if $\config{a}^\sigma\in E$, where $\config{a}^\sigma\isdef(\ldots,a_0,a_1,a_2,\ldots)$ is the sequence obtained by shifting $\config{a}$ by one position towards left.

\begin{knowntheorem}[Ergodicity of the Bernoulli process]
\label{thm:01law:shift-invariant}
	Let $\config{X}=(\ldots,X_{-1},X_0,X_1,\ldots)$ be a bi-infinite sequence of \ac{iid} random variables with values in a measurable space~$M$.
    Then, for every shift-invariant event $E\subseteq M^\ZZ$, we have $\PP(\config{X}\in E)\in\{0,1\}$.
\end{knowntheorem}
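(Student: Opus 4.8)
The plan is to carry out the information-theoretic argument flagged in the introduction. Write $F \isdef \indicator{E}(\config{X})$, a $\{0,1\}$-valued random variable. Since $F$ is a measurable function of $\config{X}$ and $H(F) \le \log 2 < \infty$, we have $I(F ; \config{X}) = H(F) - H(F \mid \config{X}) = H(F)$, so it suffices to show $I(F ; \config{X}) = 0$. By the standard continuity of mutual information along the increasing sequence of $\sigma$-algebras $\sigma(X_{-n}, \dots, X_n) \uparrow \sigma(\config{X})$, one has $I(F ; \config{X}) = \sup_{n} I(F ; X_{-n}, \dots, X_n)$, so it is enough to prove that $c_n \isdef I(F ; X_{-n}, \dots, X_n)$ vanishes for every $n \in \NN$.

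The engine is a one-line symmetry identity. Since $E$ is shift-invariant, $\indicator{E}(\config{a}) = \indicator{E}(\config{a}^{\sigma^k})$ for every $\config{a} \in M^\ZZ$ and $k \in \ZZ$; and since the $X_i$ are \ac{iid}, the sequence $\config{X}^{\sigma^k}$ has the same law as $\config{X}$. Composing the two, the law of $(F, \config{X}) = (\indicator{E}(\config{X}), \config{X})$ equals the law of $(\indicator{E}(\config{X}^{\sigma^k}), \config{X}^{\sigma^k}) = (F, \config{X}^{\sigma^k})$, for every $k$. Reading off from both sides the mutual information between $F$ and the coordinates of the window $\{-n,\dots,n\}$ (shifted by $k$ on the right-hand side) gives $I\big(F ; (X_i)_{i \in W}\big) = c_n$ for \emph{every} translate $W$ of $\{-n,\dots,n\}$: the position of the window is irrelevant.

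Now tile $\ZZ$ by the pairwise disjoint translates $W^{(j)} \isdef \{-n,\dots,n\} + j(2n+1)$, $j \in \ZZ$. Disjoint blocks of an \ac{iid} sequence are independent, so by the chain rule for mutual information — which yields the superadditivity $I(F ; A_1, \dots, A_r) \ge \sum_{i=1}^{r} I(F ; A_i)$ whenever $A_1, \dots, A_r$ are independent — we get, for every $r \ge 1$,
\[
    \log 2 \;\ge\; H(F) \;\ge\; I\big(F ; X_{W^{(1)}}, \dots, X_{W^{(r)}}\big) \;\ge\; \sum_{j=1}^{r} I\big(F ; X_{W^{(j)}}\big) \;=\; r\, c_n .
\]
Letting $r \to \infty$ forces $c_n = 0$. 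Hence $I(F ; \config{X}) = H(F) = 0$, so $F$ is almost surely constant, which is exactly $\PP(\config{X} \in E) \in \{0,1\}$.

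I expect essentially all of the work to be bookkeeping rather than a genuine obstacle: setting up mutual information in full measure-theoretic generality (the value space $M$ is an arbitrary measurable space, so $H(X_i)$ may be infinite and naive entropy manipulations are unavailable — but mutual information, the continuity $I(F;\config{X}) = \sup_n c_n$, and the chain-rule superadditivity used above all remain valid), and checking the symmetry identity carefully. A more elementary alternative that avoids information theory is the classical approximation proof: approximate $E$ by a cylinder event $E_n$ depending only on $X_{-n}, \dots, X_n$, use shift-invariance to write $\PP(\config{X} \in E) = \PP(\{\config{X} \in E\} \cap \{\config{X}^{\sigma^k} \in E\})$, push $k$ beyond $2n+1$ so that the two copies depend on disjoint, hence independent, coordinates, and let $n \to \infty$ to obtain $\PP(\config{X} \in E) = \PP(\config{X} \in E)^2$. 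The information-theoretic route is, however, the one that should extend to arbitrary positional symmetries.
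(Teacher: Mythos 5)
Your proof is correct, and it runs on the same engine as the paper's: shift-invariance plus the i.i.d.\ assumption make $I(F;X_W)$ the same for every translate $W$ of the window, independence of disjoint blocks gives the superadditivity $H(F)\geq\sum_j I(F;X_{W^{(j)}})$ (Lemma~\ref{lem:OConnell}), and the bound $H(F)\leq\log 2$ forces each term to vanish. The differences are in where the measure-theoretic weight is carried. The paper does not prove Theorem~\ref{thm:01law:shift-invariant} directly but derives it from Theorem~\ref{thm:main:iid} (translations $n+J$ disjoint from $J$ are injective positional symmetries of a shift-invariant event), and in that proof it deliberately keeps every application of O'Connell's inequality \emph{discrete} by testing against binary observables $\indicator{C}(\config{X}_K)$ for arbitrary measurable $C\subseteq M^K$; it then concludes that $Y$ is independent of each finite block and closes the argument with the continuity-of-independence lemma (Lemma~\ref{lem:independence}). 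You instead apply the superadditivity directly to the $M^{2n+1}$-valued blocks $X_{W^{(j)}}$ and finish via $I(F;\config{X})=\sup_n c_n=0$; both steps are valid but require the general (Pinsker) theory of mutual information --- the chain rule and data-processing inequality for non-discrete arguments, and the martingale-convergence fact behind $I(F;\config{X})=\sup_n I(F;X_{-n},\ldots,X_n)$ --- which is exactly the heavier route the paper acknowledges in Remark~\ref{rem:01law:kolmogorov} and chooses to avoid. So your argument buys a more streamlined presentation at the cost of invoking that machinery, while the paper's version stays elementary (only Lemma~\ref{lem:independence} is non-information-theoretic) and yields Theorem~\ref{thm:01law:shift-invariant} as a special case of a more general statement. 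Your closing remark about the classical cylinder-approximation proof matches the ``standard measure-theoretic proof'' the paper alludes to but does not carry out.
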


Observe that the hypothesis in both theorems is the invariance of the event in question under a group of transformations that only permute the elements of the sequence.  These transformations are examples of what we call positional symmetries.
\begin{definition}[Positional symmetry]
	Let $\Gamma$ be a countable set.
	A \emph{positional symmetry} of an event $E\subseteq M^\Gamma$ is a map $\pi\colon\Gamma\to\Gamma$ such that $\config{a}=(a_k)_{k\in\Gamma}\in E$ if and only if $\config{a}^\pi\isdef (a_{\pi(k)})_{k\in\Gamma}\in E$.
\end{definition}
\noindent Let us emphasize that, despite the name ``symmetry,'' we do not a priori require $\pi$ to be bijective, or even injective.
Our first theorem states that if an event concerning a countable family of \ac{iid} random variable has enough injective positional symmetries, then it is deterministic.

\begin{theorem}[Zero-one law for \ac{iid} RVs and symmetric events]
\label{thm:main:iid}
    Let $\config{X}\isdef (X_k)_{k\in\Gamma}$ be a countable family of random variables with values from a measurable space~$M$.
    Consider an event $E\subseteq M^\Gamma$, and suppose that for every finite $J \subseteq \Gamma$, the set $E$ has an injective positional symmetry $\pi\colon\Gamma\to\Gamma$ such that $\pi(J) \cap J = \varnothing$.
    Then, 
    $\PP\big(\config{X}\in E\big)\in\{0, 1\}$.
\end{theorem}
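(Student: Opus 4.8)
\emph{The plan is to run O'Connell's entropy argument.} Set $A\isdef\indicator{\config{X}\in E}$, a $\{0,1\}$-valued random variable; since $A$ is discrete, every Shannon entropy and mutual information involving $A$ that appears below is finite, so the usual chain rules apply. The conclusion $\PP(\config{X}\in E)\in\{0,1\}$ is equivalent to $A$ being almost surely constant, equivalently to $H(A)=0$ where $H$ denotes Shannon entropy; we show $H(A)<2\delta$ for every $\delta>0$. For finite $J\subseteq\Gamma$ write $\config{X}_J\isdef(X_k)_{k\in J}$. Exhaust the countable set $\Gamma$ by finite sets $J_1\subseteq J_2\subseteq\cdots$ with $\bigcup_n J_n=\Gamma$. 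By the martingale convergence theorem $\EExp[A\mid\config{X}_{J_n}]\to A$ almost surely, and since $H(A\mid\config{X}_{J_n})=\EExp\big[\,h(\EExp[A\mid\config{X}_{J_n}])\,\big]$ for the binary entropy function $h$ — continuous and bounded on $[0,1]$ and vanishing at $0$ and $1$ — bounded convergence gives $H(A\mid\config{X}_{J_n})\to0$. Fix $\delta>0$ and choose a finite $J$ with $H(A\mid\config{X}_J)<\delta$.

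\emph{The next step is to transport this approximation along a symmetry.} By hypothesis $E$ has an injective positional symmetry $\pi$ with $\pi(J)\cap J=\varnothing$. Two properties of $\pi$ matter. Because $\pi$ is injective and the $X_k$ are \ac{iid}, the reindexed family $\config{X}^\pi=(X_{\pi(k)})_{k\in\Gamma}$ has the same distribution as $\config{X}$ on $M^\Gamma$ (restricting the product law to the coordinates in $\pi(\Gamma)$ and relabelling by the bijection $\pi\colon\Gamma\to\pi(\Gamma)$ gives back the same product law). Because $E$ is $\pi$-invariant, $\indicator{\config{a}\in E}=\indicator{\config{a}^\pi\in E}$ for every $\config{a}\in M^\Gamma$, hence $A=\indicator{\config{X}\in E}=\indicator{\config{X}^\pi\in E}$. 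Applying the single measurable map $\config{a}\mapsto\big(\indicator{\config{a}\in E},\config{a}_J\big)$ to the equidistributed families $\config{X}$ and $\config{X}^\pi$ yields $(A,\config{X}_J)\stackrel{d}{=}\big(A,(X_{\pi(k)})_{k\in J}\big)$, and since $(X_{\pi(k)})_{k\in J}$ is a function of $\config{X}_{\pi(J)}$ this gives $H(A\mid\config{X}_{\pi(J)})\le H(A\mid\config{X}_J)<\delta$. Thus $A$ is nearly determined by each of the two blocks $\config{X}_J$ and $\config{X}_{\pi(J)}$, and these blocks are independent because $J\cap\pi(J)=\varnothing$ and the $X_k$ are independent.

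\emph{It remains to combine these facts.} Write $U=\config{X}_J$ and $V=\config{X}_{\pi(J)}$, so $U$ and $V$ are independent and $H(A\mid U),H(A\mid V)<\delta$. Expanding $I(A,U;V)$ by the chain rule in two ways and using $I(U;V)=0$ gives
\[
I(A;V\mid U)=I(A;V)+I(U;V\mid A)\ge I(A;V),
\]
while $I(A;V\mid U)\le H(A\mid U)<\delta$, so $I(A;V)<\delta$. Therefore
\[
H(A)=I(A;V)+H(A\mid V)<2\delta.
\]
Letting $\delta\downarrow0$ forces $H(A)=0$, so $A$ is almost surely constant and $\PP(\config{X}\in E)\in\{0,1\}$.

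\emph{The main obstacle is the transport step.} One must convert ``$A$ is nearly a function of $\config{X}_J$'' into ``$A$ is nearly a function of $\config{X}_{\pi(J)}$'' for a block disjoint from $J$, and this is precisely where both hypotheses on $\pi$ enter: $\pi$-invariance of $E$ lets $A$ be rewritten as a function of $\config{X}^\pi$, and injectivity guarantees that $\config{X}^\pi$ is again a family of \ac{iid} variables, so the distributional identity — and with it the entropy comparison $H(A\mid\config{X}_{\pi(J)})\le H(A\mid\config{X}_J)$ — holds; if $\pi$ were merely a self-map of $\Gamma$ it could duplicate coordinates and the argument would break down. Everything else — the martingale convergence and the elementary identities $I(A;V\mid U)=I(A;V)+I(U;V\mid A)$, $I(\cdot\,;\cdot\mid\cdot)\ge0$, and $I(A;V\mid U)\le H(A\mid U)$ — is routine. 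Note finally that the \ac{iid} assumption is used only through the independence of $\config{X}_J$ and $\config{X}_{\pi(J)}$ and through $\config{X}^\pi\stackrel{d}{=}\config{X}$, which is why it can be relaxed.
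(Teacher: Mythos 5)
Your proof is correct, and although it is still O'Connell's information-theoretic argument at heart, it is organized genuinely differently from the paper's. The paper fixes a finite block $K$, recursively builds an \emph{infinite} sequence of symmetries $\pi_1,\pi_2,\ldots$ with $K,\pi_1(K),\pi_2(K),\ldots$ pairwise disjoint, applies O'Connell's inequality to all $n$ copies to force $I\big(Y:\indicator{C}(\config{X}_K)\big)=0$ exactly, and then needs a separate measure-theoretic step (Lemma~\ref{lem:independence}, continuity of independence) to pass from ``$Y$ is independent of every finite block'' to ``$Y$ is independent of itself.'' You instead front-load the approximation: L\'evy's upward martingale convergence theorem produces a single finite $J$ with $H(A\mid\config{X}_J)<\delta$, after which \emph{one} symmetry and the two-block case of O'Connell's inequality (equivalently, $H(A)\geq I(A;U)+I(A;V)$ for independent $U,V$) already give $H(A)<2\delta$. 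This buys a shorter argument that dispenses with both the recursive construction of disjoint symmetries and Lemma~\ref{lem:independence}. What it costs is one technical point you gloss over: your chain-rule manipulation $I(A,U;V)=I(U;V)+I(A;V\mid U)=I(A;V)+I(U;V\mid A)$ is applied with $U=\config{X}_J$ and $V=\config{X}_{\pi(J)}$ taking values in the general measurable space $M^J$, so you are implicitly invoking the general (Pinsker-style) theory of mutual information rather than the purely discrete one; the paper deliberately avoids this by only ever computing $I$ between binary indicators $\indicator{C}(\config{X}_K)$, and flags the analogous issue in Remark~\ref{rem:01law:kolmogorov}. This is not a gap---the inequality you actually need, $H(A)\leq H(A\mid U)+H(A\mid V)$ for independent $U,V$ and binary $A$, follows from the discrete case by approximating $\sigma(U)$ and $\sigma(V)$ by increasing finite sub-$\sigma$-algebras and passing to the limit---but it deserves a sentence. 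The transport step $(A,\config{X}_J)\stackrel{d}{=}\big(A,(X_{\pi(k)})_{k\in J}\big)$ and the roles you assign to injectivity and to the symmetry hypothesis are exactly right.
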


Theorems~\ref{thm:01law:hewitt-savage} and~\ref{thm:01law:shift-invariant} are both corollaries of Theorem~\ref{thm:main:iid}.
Indeed, for every finite set $J\subseteq\NN$, there exist (many) finitary permutations $\pi$ that map $J$ to a set disjoint from~$J$.  Likewise, for every finite $J\subseteq\NN$ one can find (many) translations $n+J$ that are disjoint from~$J$.
On the other hand, Theorem~\ref{thm:main:iid} covers examples to which Theorems~\ref{thm:01law:hewitt-savage} and~\ref{thm:01law:shift-invariant} do not apply.  We discuss two applications of this theorem in Section~\ref{sec:applications}.  First, we show that if $G$ is a sufficiently symmetric countably infinite graph, then the graph-theoretic properties of the random subgraphs of $G$ satisfy a zero-one law (Section~\ref{sec:app:graphs}).  Second, we show that every tail event concerning the iterates of a simple sufficiently symmetric renormalization map on an \ac{iid} sequence of random variables is deterministic (Section~\ref{sec:app:renormalization}).

In the language of ergodic theory, the Hewitt-Savage zero-one law asserts that every homogeneous product measure on a countable product space is (invariant and) ergodic for the group of finitary permutations, while Theorem~\ref{thm:01law:shift-invariant} says the same thing for the group generated by the shift map.
Call a semigroup $\Pi$ of maps on a countable set~$\Gamma$ \emph{dispersive} if for every finite set $J\subseteq\Gamma$, there exists an element $\pi\in\Pi$ such that $\pi(J)\cap J=\varnothing$.
Now, Theorem~\ref{thm:main:iid} can be rephrased as follows:
\begin{quote}
    \emph{Every homogeneous product measure on a countable product space is ergodic with respect to every dispersive semigroup of injective positional maps.}
\end{quote}
As observed in the previous paragraph, the group of finitary permutations and the group generated by the shift transformation are both dispersive.

The Hewitt-Savage zero-one law goes hand in hand with de~Finetti's theorem, which states that the distribution of every exchangeable sequence of random variables (taking values in a standard Borel space) is a mixture of homogeneous product measures~\cite{AB2015,HS1955}.  An exchangeable sequence is a sequence whose distribution is invariant under finitary permutations.  In light of the uniqueness of ergodic decomposition (see the paper by Dynkin~\cite{Dynkin1978}), de~Finetti's theorem can be thought of as a converse to the Hewitt-Savage theorem: every measure that is ergodic for the group of finitary permutations is a homogeneous product measure.
The analogous converses for Theorems~\ref{thm:01law:shift-invariant} and~\ref{thm:main:iid} do not hold: the product measures are by no means the only ergodic invariant measures for the shift transformation.


A proof of Theorem~\ref{thm:main:iid} is given in Section~\ref{sec:main-result:proof}.
Our proof is an adaptation of an information-theoretic proof of the Hewitt-Savage theorem given by O'Connell~\cite{OConnell2000} (see Section~\ref{sec:intro:oconnel}).  Although a more mainstream measure-theoretic proof (along the same lines as the standard proofs of Theorems~\ref{thm:01law:hewitt-savage} and~\ref{thm:01law:shift-invariant}) is also possible, we find the information-theoretic argument more insightful.  Furthermore, the information-theoretic approach makes the proofs of the following results somewhat less technical.

The \ac{iid} hypothesis in Theorem~\ref{thm:main:iid} can be relaxed at the cost of adding an extra condition on how the positional maps affect the distribution of the random variables.  In Section~\ref{sec:non-iid}, we prove a general theorem in this direction (Theorem~\ref{thm:main:non-iid}).  However, the hypothesis of the latter theorem is rather technical.  Below, we state two more concrete corollaries of this general result, one in which the random variables are allowed to have different distributions, and one in which they are allowed to be only asymptotically independent.

\begin{theorem}[Zero-one law for independent but not identically distributed RVs]
\label{thm:main:independent}
    Let $\config{X}\isdef (X_k)_{k\in\Gamma}$ be a countable family of independent random variables with values from a measurable space~$M$, where $X_k$ is distributed according to a measure $p_k$.
    Consider an event $E\subseteq M^\Gamma$, and suppose that for every finite set $J\subseteq\Gamma$ and every $\delta>0$, the event $E$ has an injective positional symmetry $\pi\colon\Gamma\to\Gamma$ such that $\pi(J)\cap J=\varnothing$ and $\sum_{k\in\Gamma}\norm{p_{\pi(k)}-p_k}_\TV<\delta$.
    Then, 
    $\PP\left(\config{X}\in E\right)\in\{0,1\}$.
\end{theorem}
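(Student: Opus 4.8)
The plan is to run the information-theoretic argument that proves Theorem~\ref{thm:main:iid} (O'Connell's argument), but to replace every step that used exact distributional invariance of $\config{X}$ under positional symmetries by an approximate version whose error is controlled by the new total-variation hypothesis. Write $\mu\isdef\bigotimes_{k\in\Gamma}p_k$ for the law of $\config{X}$. For an injective positional symmetry $\pi$, the coordinates of $\config{X}^\pi\isdef(X_{\pi(k)})_{k\in\Gamma}$ are still independent (a relabelling of an independent family by an injection stays independent), so $\config{X}^\pi$ has law $\mu^\pi\isdef\bigotimes_{k\in\Gamma}p_{\pi(k)}$. The standard coupling bound for product measures then gives $\norm{\mu-\mu^\pi}_\TV\le\sum_{k\in\Gamma}\norm{p_{\pi(k)}-p_k}_\TV$, and the same bound restricted to any finite $J\subseteq\Gamma$ gives $\norm{\bigotimes_{k\in J}p_{\pi(k)}-\bigotimes_{k\in J}p_k}_\TV\le\sum_{k\in\Gamma}\norm{p_{\pi(k)}-p_k}_\TV$; both are $<\delta$ once $\pi$ satisfies $\sum_{k\in\Gamma}\norm{p_{\pi(k)}-p_k}_\TV<\delta$.

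Now fix $\epsilon>0$. Since cylinder events are dense in measure, I would pick a finite $J\subseteq\Gamma$ and an event $E_J$ depending only on the coordinates in $J$ with $\PP(\config{X}\in E\symdiff E_J)<\epsilon$, and then invoke the hypothesis with this $J$ and with $\delta\isdef\epsilon$ to get an injective positional symmetry $\pi$ of $E$ with $\pi(J)\cap J=\varnothing$ and both total-variation bounds above $<\epsilon$. Put $A\isdef\indicator{\config{X}\in E}$, $B\isdef\indicator{\config{X}\in E_J}$, $B'\isdef\indicator{\config{X}^\pi\in E_J}$. Because $\pi$ is a positional symmetry of $E$, $\indicator{\config{X}^\pi\in E}=A$ identically, so $\PP(A\ne B')=\mu^\pi(E\symdiff E_J)\le\mu(E\symdiff E_J)+\norm{\mu-\mu^\pi}_\TV<2\epsilon$, whence $\PP(B\ne B')\le\PP(B\ne A)+\PP(A\ne B')<3\epsilon$. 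On the other hand $B$ is $\sigma(X_k:k\in J)$-measurable and $B'$ is $\sigma(X_k:k\in\pi(J))$-measurable; since the $X_k$ are independent and $J\cap\pi(J)=\varnothing$, the variables $B$ and $B'$ are \emph{independent} Bernoulli, and the finite-$J$ total-variation bound gives $\abs{\PP(B=1)-\PP(B'=1)}<\epsilon$. Writing $q\isdef\PP(B=1)$, independence gives $\PP(B\ne B')\ge 2q(1-q)-\epsilon$, so $q(1-q)<2\epsilon$; combined with $\abs{q-\PP(\config{X}\in E)}\le\PP(B\ne A)<\epsilon$ and $\abs{p(1-p)-q(1-q)}\le\abs{p-q}$ for $p\isdef\PP(\config{X}\in E)$, this yields $p(1-p)<3\epsilon$. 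Letting $\epsilon\downarrow0$ forces $p\in\{0,1\}$. (In genuinely information-theoretic form, replace the last line by: independence gives $I(B;B')=0$, Fano's inequality gives $H(B)\le h(\PP(B\ne B'))$, and $H(A)\le H(B)+h(\PP(A\ne B))$, so $H(A)=0$.)

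I expect the only point needing care to be the order of quantifiers: the hypothesis lets us choose $\pi$, and hence $\delta$, \emph{after} $J$, which is exactly what lets the two total-variation errors be absorbed into the single $\epsilon$ and driven to $0$. Everything else — density of cylinder events, the identity $\indicator{\config{X}^\pi\in E}=A$, and the independence of $B$ and $B'$ coming from $J\cap\pi(J)=\varnothing$ — is verbatim as in the proof of Theorem~\ref{thm:main:iid}. An alternative route, presumably the one taken in the paper, is to check directly that this hypothesis implies the (more technical) hypothesis of the general non-iid zero-one law, Theorem~\ref{thm:main:non-iid}, and quote that.
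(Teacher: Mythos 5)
Your proof is correct, but it takes a genuinely different route from the paper. The paper proves Theorem~\ref{thm:main:independent} by verifying the two hypotheses of the general non-\ac{iid} result (Theorem~\ref{thm:main:non-iid}): condition~(i) holds because $\pi(J)\cap J=\varnothing$ makes $\config{X}_J$ and $\config{X}^\pi_J$ exactly independent, and condition~(ii) is exactly the product-measure coupling bound $\norm{\mu-\mu^\pi}_\TV\le\sum_{k\in\Gamma}\norm{p_{\pi(k)}-p_k}_\TV$ that you also use; the zero-one law is then delivered by the approximate O'Connell inequality inside Theorem~\ref{thm:main:non-iid}. You instead bypass the information-theoretic machinery entirely and give a direct, self-contained measure-theoretic argument: approximate $E$ by a cylinder event $E_J$, use the symmetry $\pi$ to manufacture a copy $B'$ of the cylinder indicator that is independent of $B$ (since $J\cap\pi(J)=\varnothing$) yet almost surely equal to it (since $\indicator{E}(\config{X}^\pi)=\indicator{E}(\config{X})$ and the two laws are close in total variation), and conclude $p(1-p)<3\epsilon$ for all $\epsilon$. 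All the estimates check out, including the elementary inequalities $\PP(B\ne B')\ge 2q(1-q)-\epsilon$ and $\abs{p(1-p)-q(1-q)}\le\abs{p-q}$. What your route buys is elementarity and independence from Sections~\ref{sec:intro:oconnel} and~\ref{sec:non-iid:general:proof} --- it is exactly the ``mainstream measure-theoretic proof'' the introduction alludes to, adapted to the approximate setting. What the paper's route buys is economy given that Theorem~\ref{thm:main:non-iid} must be proved anyway for Theorem~\ref{thm:main:k-mixing} (where your cylinder-approximation trick would need more care, since there exact independence of $B$ and $B'$ fails), plus a uniform information-theoretic narrative. Your closing guess about the paper's strategy is accurate.
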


\begin{theorem}[Zero-one law for asymptotically independent RVs]
\label{thm:main:k-mixing}
    Let $\config{X}\isdef (X_k)_{k\in\Gamma}$ be a countable family of random variables with values from a measurable space~$M$, and suppose that $\config{X}$ has the Kolmogorov mixing property.
    Consider an event $E\subseteq M^\Gamma$, and suppose that for every finite set $J\subseteq\Gamma$, the event $E$ has a positional symmetry $\pi\colon\Gamma\to\Gamma$ such that $\pi(J)\cap J=\varnothing$ and $\config{X}$ and $\config{X}^\pi=(X_{\pi(k)})_{k\in\Gamma}$ have the same distribution.
    Then, 
    $\PP\left(\config{X}\in E\right)\in\{0,1\}$.
\end{theorem}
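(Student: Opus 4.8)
The plan is to show that $p\isdef\PP(\config{X}\in E)$ satisfies $p=p^2$, which forces $p\in\{0,1\}$. Following the same idea as in the proof of Theorem~\ref{thm:main:iid}, one compares the event $\{\config{X}\in E\}$ with two near-copies of itself living on disjoint blocks of coordinates and then exploits their (approximate) independence. The new feature relative to the \ac{iid} case is that the Kolmogorov mixing property only yields \emph{asymptotic} independence, so the second copy has to be transported past a finite ``mixing horizon'' rather than merely off the first block; the point of the argument is to arrange this by handing the correct finite set to the symmetry hypothesis.

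Fix $\epsilon>0$. The event $\{\config{X}\in E\}$ lies in $\sigma(\config{X})$, and the cylinder events $\{(X_k)_{k\in J}\in B\}$ (over finite $J\subseteq\Gamma$ and measurable $B\subseteq M^J$) form an algebra generating $\sigma(\config{X})$, so there are a finite $J\subseteq\Gamma$ and such a cylinder event $A=\{(X_k)_{k\in J}\in B\}$ with $\PP\big(\{\config{X}\in E\}\symdiff A\big)<\epsilon$. By the Kolmogorov mixing property of $\config{X}$, applied to $A$ with error $\epsilon$ and enlarging the resulting set to contain $J$, we obtain a finite $F\subseteq\Gamma$ with $J\subseteq F$ such that $\abs{\PP(A\cap B')-\PP(A)\PP(B')}<\epsilon$ for every event $B'$ measurable with respect to $(X_k)_{k\in\Gamma\setminus F}$. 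Now invoke the hypothesis \emph{for the set $F$}: there is a positional symmetry $\pi$ of $E$ with $\pi(F)\cap F=\varnothing$ such that $\config{X}^\pi$ and $\config{X}$ have the same distribution. Set $A'\isdef\{(X_{\pi(k)})_{k\in J}\in B\}$; this is measurable with respect to $(X_k)_{k\in\pi(J)}$, and $\pi(J)\subseteq\pi(F)\subseteq\Gamma\setminus F$.

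The crux is that $A'$ approximates $\{\config{X}\in E\}$ just as well as $A$ does. Let $C\isdef\{\config{a}\in M^\Gamma:\config{a}|_J\in B\}$, so that $A=\{\config{X}\in C\}$ and $A'=\{\config{X}^\pi\in C\}$. Since $\pi$ is a positional symmetry of $E$, $\config{X}\in E\iff\config{X}^\pi\in E$ pointwise, hence $\{\config{X}^\pi\in E\}=\{\config{X}\in E\}$; and $\config{X}^\pi$ has the same law as $\config{X}$. Therefore
\[
  \PP\big(\{\config{X}\in E\}\symdiff A'\big)
  =\PP\big(\{\config{X}^\pi\in E\}\symdiff\{\config{X}^\pi\in C\}\big)
  =\PP\big(\{\config{X}\in E\}\symdiff\{\config{X}\in C\}\big)
  <\epsilon .
\]
Note that this uses neither injectivity of $\pi$ nor that $(X_{\pi(k)})_{k\in J}$ be distributed as $(X_k)_{k\in J}$; it is precisely here that a ``same distribution'' hypothesis, weaker than the \ac{iid} assumption, is what is needed.

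It remains to assemble the estimates. From $\PP\big(\{\config{X}\in E\}\symdiff A\big)<\epsilon$ and $\PP\big(\{\config{X}\in E\}\symdiff A'\big)<\epsilon$ we get $\abs{\PP(A)-p}<\epsilon$ and $\abs{\PP(A')-p}<\epsilon$, and, using $(A\cap A')\symdiff\{\config{X}\in E\}\subseteq\big(A\symdiff\{\config{X}\in E\}\big)\cup\big(A'\symdiff\{\config{X}\in E\}\big)$, also $\abs{\PP(A\cap A')-p}<2\epsilon$. Since $A'$ is measurable with respect to $(X_k)_{k\in\Gamma\setminus F}$, the mixing estimate gives $\abs{\PP(A\cap A')-\PP(A)\PP(A')}<\epsilon$, while $\abs{\PP(A)\PP(A')-p^2}<2\epsilon$. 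By the triangle inequality $\abs{p-p^2}<5\epsilon$, and letting $\epsilon\downarrow0$ yields $p=p^2$, hence $p\in\{0,1\}$. I expect the only genuinely delicate point to be recognizing that the symmetry hypothesis must be applied to the finite horizon $F$ coming from the mixing property, not to the approximating block $J$ — only then does the transported copy $A'$ land in a region where the mixing estimate is available; the transfer of approximation quality from $A$ to $A'$ is then immediate from the two facts $\{\config{X}^\pi\in E\}=\{\config{X}\in E\}$ and the equality in distribution of $\config{X}^\pi$ and $\config{X}$. (The statement also follows from the general Theorem~\ref{thm:main:non-iid}; the argument above is its self-contained specialization.)
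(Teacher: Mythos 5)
Your proof is correct, but it is \emph{not} the paper's route: the paper proves Theorem~\ref{thm:main:k-mixing} by verifying the two hypotheses of the general Theorem~\ref{thm:main:non-iid} (constructing a mixing horizon $K\supseteq J$ as a union of horizons $K_A$ over the finitely many subsets $A$ of the observable's range, and handing $K$ to the symmetry hypothesis), and then all the real work happens inside Theorem~\ref{thm:main:non-iid} via the approximate O'Connell inequality (Lemma~\ref{lem:OConnell:approximate}) and the continuity lemmas for mutual information. You instead give a direct, self-contained, purely measure-theoretic argument: approximate $\{\config{X}\in E\}$ by a cylinder $A$ on a finite base $J$, enlarge $J$ to the mixing horizon $F$, apply the symmetry hypothesis to $F$ (not to $J$ --- you correctly identify this as the one delicate point, and it is the same point the paper's proof turns on), transport the cylinder to $A'$ living outside $F$, use $\{\config{X}^\pi\in E\}=\{\config{X}\in E\}$ together with $\config{X}^\pi\sim\config{X}$ to see that $A'$ approximates the event equally well, and conclude $\abs{p-p^2}<5\epsilon$. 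All steps check out, including the measurability of $A'$ with respect to the coordinates in $\pi(J)$ without any injectivity of $\pi$, and the transfer of the approximation bound, which genuinely needs the full distributional equality $\config{X}^\pi\sim\config{X}$ rather than just the $J$-marginal. What your approach buys is brevity and elementarity: one application of mixing, one symmetry per $\epsilon$, no information theory, and no recursive construction of a dispersive sequence of symmetries --- it is essentially the classical measure-theoretic proof of the Hewitt--Savage law (Theorem~\ref{thm:01law:hewitt-savage}) adapted to the mixing setting, the kind of argument the paper acknowledges is possible but deliberately avoids. What the paper's approach buys is uniformity --- all three main theorems flow from one information-theoretic template --- plus the stronger intermediate conclusion that $\indicator{E}(\config{X})$ is independent of every finite subfamily $\config{X}_K$, not merely that $\PP(\config{X}\in E)\in\{0,1\}$. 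Your closing parenthetical calling your argument a ``specialization'' of Theorem~\ref{thm:main:non-iid} undersells the difference: it is an independent proof, not a specialization of the paper's.
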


\noindent
The notation $\norm{q-p}_\TV$ stands for the total variation distance between two measures $p$ and $q$.
The Kolmogorov mixing condition is one of the variants of the intuitive concept of being ``asymptotically independent''.
It is satisfied, for instance,
by every recurrent countable-state Markov chain with deterministic initial state~\cite{BF1964},
every extremal Gibb random field~\cite{Georgii2011}, and every determinantal process~\cite{Lyons2003}.
The definition of the Kolmogorov mixing property can be found in Section~\ref{sec:non-iid:k-mixing}.
The proofs of Theorems~\ref{thm:main:independent} and~\ref{thm:main:k-mixing} are given in Sections~\ref{sec:non-iid:independent} and~\ref{sec:non-iid:k-mixing}, respectively.

Various other extensions of the Hewitt-Savage zero-one law can be found in the literature.
In the case where the random variables are independent but not necessarily identically distributed, Aldous and Pitman gave a necessary and sufficient condition on the marginal distributions of the random variables in order for them to satisfy the Hewitt-Savage zero-one law~\cite{AP1979} (see also~Georgii's ``canonical'' monograph~\cite[Thm.~4.23]{Georgii1979}).
The hypothesis on the marginal distributions in our Theorem~\ref{thm:main:independent} is certainly not necessary (see Remark~\ref{rem:main:independent:not-necessary} in Section~\ref{sec:non-iid:independent}), but when restricted to exchangeable events, Theorem~\ref{thm:main:independent} covers an earlier result of Blum and Pathak~\cite{BP1972}.

Blackwell and Freedman proved an analogue of the Hewitt-Savage zero-one law for any recurrent countable-state Markov chain that has a deterministic initial state~\cite{BF1964}.  This result was obtained by applying the original Hewitt-Savage law to the regenerative representation of the Markov chain.  As noted by James, the same argument shows that, in fact, every \emph{partially exchangeable} event regarding such a Markov chain is deterministic~\cite[Intro.]{James1996}.  The notions of partial exchangeability for events and processes were introduced by Diaconis and Freedman, who first proved the latter zero-one law as a byproduct of their de~Finetti-type result for recurrent partially exchangeable processes~\cite{DF1980}.  A complementary result for transient partially exchangeable processes was recently proved by Halberstam and Hutchcroft~\cite{HH2024}.


The original works of de~Finetti, Hewitt and Savage and most of the subsequent developments around exchangeability were primarily motivated by the foundations of statistics.
In a parallel development, Georgii and Preston proved a de~Finetti-type theorem 
in the context of statistical mechanics~\cite{Georgii1976,Preston1979,Georgii1979}.
Namely, they showed that, under general conditions, every canonical Gibbs measure for a given interaction can be represented as a mixture of (grand canonical) Gibbs measures for the same interaction and arbitrary values of the activity parameters.
In particular, every extremal canonical Gibbs measure is an extremal Gibbs measure for some values of the activity parameters, and vice versa.  It is easy to show that exchangeable events are deterministic under extremal canonical Gibbs measures.  As a corollary, they thus obtained a Hewitt-Savage zero-one law for extremal Gibbs measures.
(The hypotheses of these results are, for instance, satisfied when the alphabet is finite, the set of sites is $\ZZ^d$ for some $d\in\NN$, and the interaction is finite-range and shift-invariant.)

Note that, although extremal Gibbs measures and recurrent Markov chains with deterministic initial states satisfy the Kolmogorov mixing property, the Blackwell-Freedman and Georgii-Preston zero-one laws do not directly follow from our Theorem~\ref{thm:main:k-mixing}, because when $\pi$ is a finitary permutation, $\underline{X}$~and $\underline{X}^\pi$ rarely have the same distribution.  We conjecture that, in Theorem~\ref{thm:main:k-mixing}, the requirement that $\underline{X}$~and $\underline{X}^\pi$ have the same distribution can be replaced with a much weaker requirement.


For \emph{finite} families of random variables, none of the above-mentioned results hold exactly, but Diaconis and Freedman obtained an approximate version of de~Finetti's theorem~\cite{Diaconis1977,DF1980a}.  Deriving approximate zero-one laws appears to be a more subtle task, for even simple examples of purely exchangeable events concerning a finite family of \ac{iid} random variables can fail to be approximately deterministic.
Nonetheless, Fagin's zero-one law for the first order properties of Erdős-Rényi random graphs~\cite{Fagin1976} (see the monograph by Spencer~\cite{Spencer2001}) and the sharp threshold results for the monotonic Boolean functions with symmetries~\cite{Russo1982,Talagrand1994,FK1996} can be thought of as approximate (or asymptotic) zero-one laws that are driven by symmetries.

For more on the many aspects of exchangeability and related topics, see the reviews by Kingman~\cite{Kingman1978}, Aldous~\cite{Aldous1985}, 
Kallenberg~\cite{Kallenberg2005}, and Austin~\cite{Austin2008}, and the references therein.

Applications of information-theoretic tools and arguments in probability theory and statistics have a long and rich history.
Notable examples include (but are not limited to) Rényi's proof of the Markov chain convergence theorem~\cite{Renyi1961}, proofs of the central limit theorem by Linnik, Barron and others~\cite{Linnik1959,Barron1986,ABB+2004}, and a general treatment of large deviations~\cite{Csiszar1984}.
See the books by Johnson~\cite{Johnson2004}, Csiszár and Shields~\cite{CS2004}, and Cover and Thomas~\cite[Chap.~11]{CT2006}, and the recent article by Gavalakis and Kontoyiannis~\cite{GK2023} for a review of some of such applications.
Recently, Gavalakis, Kontoyiannis, and Berta came up with information-theoretic proofs of the approximate version of de~Finetti's theorem for finite families~\cite{GK2021,GK2023,BGK2024}.
It would be very interesting to see if information-theoretic techniques can shed light on approximate or asymptotic zero-one laws (for finite families of random variables) such as the ones mentioned above.

\subsection{Information theory and O'Connell's argument}
\label{sec:intro:oconnel}
The standard proofs of the Kolmogorov and Hewitt-Savage zero-one laws and the ergodicity of the Bernoulli process are based on measure theory (see \ac{eg}, the books by Dudley~\cite[Sec.~8.4]{Dudley2004} or Klenke~\cite{Klenke2020}).  
Neil O'Connell came up with a short and elegant information-theoretic proof of the Hewitt-Savage zero-one law that limited the measure-theoretic aspect to a simple intuitive lemma~\cite{OConnell2000}.
An almost verbatim adaptation of O'Connell's argument gives a proof of the ergodicity of the Bernoulli process.
Our first result (Theorem~\ref{thm:main:iid}) formulates a general hypothesis that makes the same argument work.  The relaxation to non-\ac{iid} families (Theorem~\ref{thm:main:non-iid} and its corollaries, Theorems~\ref{thm:main:independent} and~\ref{thm:main:k-mixing}) combines O'Connell's idea with an approximation argument.

Let us now briefly review O'Connell's proof of the Hewitt-Savage theorem.
O'Connell presented his proof in the elementary case in which the random variables take their values from a finite set.
We present a variant of the proof that works in general and still remains mostly elementary.
For a review of basic concepts in information theory, we refer to Chapter~2 of the book by Cover and Thomas~\cite{CT2006}.
As usual, we use the notation $H(X)$ for the Shannon entropy of a random variable~$X$, and $I(X:Y)$ for the mutual information between two random variables $X$ and $Y$.

The argument relies on two lemmas.  The first is a simple but useful information-theoretic inequality.  The second is a standard fact from measure theory.

\begin{knownlemma}[O'Connell's inequality]
\label{lem:OConnell}
	Let $V,W_1,W_2,\ldots,W_n$ be discrete random variables and suppose that $W_1,W_2,\ldots,W_n$ are independent. Then,
	\useshortskip
	\begin{align}
		H(V) &\geq \sum_{i=1}^{n} I(V:W_i) \;.
	\end{align}
\end{knownlemma}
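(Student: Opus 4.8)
The plan is to derive the inequality from two elementary facts of information theory: the chain rule for mutual information, and the fact that extra conditioning on variables independent of $W_i$ cannot decrease the information that $W_i$ carries about~$V$. First, if $H(V)=\infty$ there is nothing to prove, so I would assume $H(V)<\infty$; then every entropy of~$V$ (conditional or not) and every mutual information of the form $I(V:\cdot)$ appearing below is finite, which keeps the manipulations legitimate.

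The backbone is the chain rule
\[
	I\big(V:(W_1,\ldots,W_n)\big) \;=\; \sum_{i=1}^{n} I\big(V:W_i \mid W_1,\ldots,W_{i-1}\big),
\]
so it suffices to prove the termwise bound $I\big(V:W_i \mid W_1,\ldots,W_{i-1}\big)\ge I(V:W_i)$ for each~$i$ and then combine it with $I\big(V:(W_1,\ldots,W_n)\big)=H(V)-H\big(V\mid W_1,\ldots,W_n\big)\le H(V)$. The termwise bound is exactly where the independence of $W_1,\ldots,W_n$ enters. One way to see it: by the symmetry of the interaction information $I(A:B)-I(A:B\mid C)$ under permutations of $A,B,C$, the difference $I\big(V:W_i \mid W_1,\ldots,W_{i-1}\big)-I(V:W_i)$ equals $I\big(W_i:(W_1,\ldots,W_{i-1})\mid V\big)-I\big(W_i:(W_1,\ldots,W_{i-1})\big)$; the second term is zero because $W_i$ is independent of $(W_1,\ldots,W_{i-1})$, and the first is a conditional mutual information, hence nonnegative. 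When the $W_i$ additionally have finite entropy this can also be checked directly: $I\big(V:W_i\mid W_1,\ldots,W_{i-1}\big)=H(W_i)-H\big(W_i\mid W_1,\ldots,W_{i-1},V\big)\ge H(W_i)-H(W_i\mid V)=I(V:W_i)$, using independence in the first equality and ``conditioning reduces entropy'' in the inequality.

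Putting the pieces together yields $\sum_{i=1}^n I(V:W_i)\le\sum_{i=1}^n I\big(V:W_i\mid W_1,\ldots,W_{i-1}\big)=I\big(V:(W_1,\ldots,W_n)\big)\le H(V)$, which is the claim. I expect the only genuinely delicate point to be the bookkeeping of infinities: even with $H(V)<\infty$, some $W_i$ may have infinite entropy, in which case the ``$H(W_i)-H(W_i\mid\cdot)$'' formulas are formally of the indeterminate type $\infty-\infty$. The safe route, which I would follow in the write-up, is to express everything through entropies of~$V$ alone (finite by assumption) and to invoke the general definition of mutual information as a supremum over finite partitions when justifying the chain rule and the monotonicity $I\big(V:W_i\mid W_1,\ldots,W_{i-1}\big)\ge I(V:W_i)$, rather than relying on entropy subtractions involving the $W_i$.
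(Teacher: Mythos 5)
Your proof is correct and follows essentially the same route as the paper's: the paper does not prove Lemma~\ref{lem:OConnell} directly but obtains it as the $\gamma=0$ case of its approximate version (Lemma~\ref{lem:OConnell:approximate}), whose proof is exactly your computation --- bound $H(V)$ below by $I\big(V:(W_1,\ldots,W_n)\big)$, expand by the chain rule, and use independence together with the nonnegativity of conditional mutual information (equivalently, data processing) to lower-bound each term by $I(V:W_i)$. Your additional care about the $W_i$ possibly having infinite entropy is a sensible refinement but does not change the argument.
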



\begin{figure}
    \begin{center}
        \begin{tikzpicture}[baseline,>=stealth,xscale=0.8, yscale=0.6]
            \begin{scope}
                \clip (0,0) node {$H(V)$} circle (3);

                \fill[shift=(140:3.5),rotate=-40,blue!20] (0,0) circle (1.7 and 0.85);
                \fill[shift=(200:3.5),rotate=20,blue!20] (0,0) circle (2 and 1);
                \fill[shift=(-80:3),rotate=-80,blue!20] (0,0) circle (1 and 1.05);
                \fill[shift=(-10:4),rotate=-10,blue!20] (0,0) circle (2.3 and 1.8);
                \fill[shift=(55:3.2),rotate=55,blue!20] (0,0) circle (0.9 and 0.9);
            \end{scope}
            
            \draw[very thick] (0,0) node {$H(V)$} circle (3);
                
            \draw[shift=(140:3.5),rotate=-40,very thick] (0,0) circle (1.7 and 0.85)
                node[above left=2em] {$H(W_1)$} node[above right=1em,xshift=-0.5em,blue] {\scriptsize $I(V:W_1)$};

            \draw[shift=(200:3.5),rotate=20,very thick] (0,0) circle (2 and 1)
                node[below left=1.8em] {$H(W_2)$} node[below right=1.7em,xshift=-2.3em,blue] {\scriptsize $I(V:W_2)$};

            \draw[shift=(-80:3),rotate=-80,very thick] (0,0) circle (1 and 1.05)
                node[below=1.5em] {$H(W_3)$} node[left=2.2em,yshift=-0.5em,blue] {\scriptsize $I(V:W_3)$};

            \draw[shift=(-10:4),rotate=-10,very thick] (0,0) circle (2.3 and 1.8)
                node[below=3em] {$H(W_4)$} node[xshift=-1em,yshift=3.7em,blue] {\scriptsize $I(V:W_4)$};

            \draw[shift=(55:3.2),rotate=55,very thick] (0,0) circle (0.9 and 0.9)
                node[above=1.4em,xshift=0.5em] {$H(W_5)$} node[xshift=-3.7em,yshift=1.2em,blue] {\scriptsize $I(V:W_5)$};
        \end{tikzpicture}
    \end{center}
\caption{%
    An intuitive illustration of O'Connell's inequality.
    Each region represents the information carried by one of the random variables.
    The regions corresponding to independent random variables are depicted to be disjoint.
    This illustration however must not be taken too literally because, although $W_1,W_2,\ldots,W_n$ are independent, the information pieces they carry about $V$ need not be independent.
}
\label{fig:OConnell:inequality}
\end{figure}
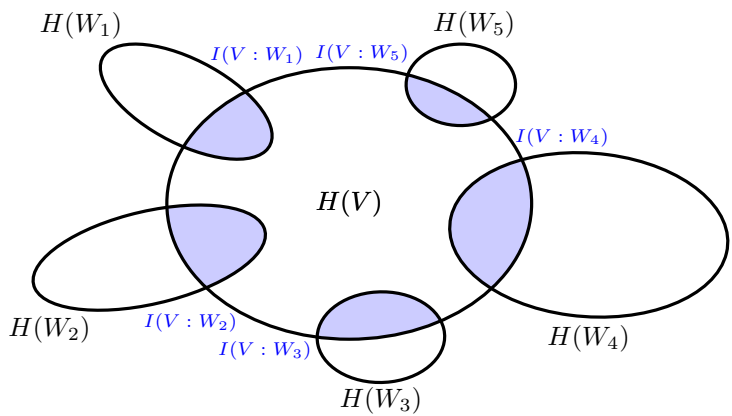


\begin{knownlemma}[Closedness of independence]
\label{lem:independence}
	Let $Y,X_1,X_2,\ldots$ be random variables, and suppose that for every $m$, the random variable $Y$ is independent of $(X_1,X_2,\ldots,X_m)$.  Then, $Y$ is independent of the entire sequence $(X_1,X_2,\ldots)$.
\end{knownlemma}


\noindent See Figure~\ref{fig:OConnell:inequality} for an illustration of the statement of Lemma~\ref{lem:OConnell}.  In Section~\ref{sec:non-iid}, we will prove an approximate version of this lemma for almost independent random variables.  Lemma~\ref{lem:independence} is the main step in the proof of the Kolmogorov zero-one law (see Remark~\ref{rem:01law:kolmogorov}).

The proof of the theorem now proceeds as follows.  Let $Y\isdef\indicator{E}(X_1,X_2,\ldots)$, where $\indicator{E}$ is the indicator of~$E$.
Let $C\subseteq M$ be a measurable set and set $Z_k\isdef\indicator{C}(X_k)$ for each $k$.
Since $X_1,X_2,\ldots$ are independent, by Lemma~\ref{lem:OConnell}, for every $n$, we have
\begin{align}
    H(Y) &\geq I(Y:Z_1) + I(Y:Z_2) + \cdots + I(Y:Z_n) \;.
\end{align}
Since $E$ is exchangeable and $X_i$'s are identically distributed, we have $I(Y:Z_1)=I(Y:Z_2)=\cdots=I(Y:Z_n)$.  Thus, $H(Y)\geq n I(Y:Z_1)$.
But $H(Y)$ is bounded by $\log 2$ and $n$ is arbitrary, hence $I(Y:Z_1)=0$.
Therefore, $Y$ is independent of $Z_1$.  Since $C$ is arbitrary, it follows that $Y$ is independent of~$X_1$.
Similarly, by grouping $X_i$'s into blocks of size~$m$ and applying the same argument, we can show that $Y$ is independent of $(X_1,X_2,\ldots,X_m)$ for every~$m$.
Lemma~\ref{lem:independence} now implies that $Y$ is independent of the entire sequence $(X_1,X_2,\ldots)$.
In particular, $Y$ is independent of itself.  Hence,
\begin{align}
    \PP\big(\config{X}\in E) = \PP\big(\config{X}\in E\cap E) = \PP\big(\config{X}\in E)\PP\big(\config{X}\in E) \;,
\end{align}
which implies that $\PP\big(\config{X}\in E)$ is either $0$ or~$1$.

\begin{remark}[Information-theoretic ``proof' of the Kolmogorov zero-one law]
\label{rem:01law:kolmogorov}
    The second half of the above argument (starting from the application of Lemma~\ref{lem:independence}) simply repeats the standard proof of the Kolmogorov zero-one law.  Let us point out that this part, too, has an information-theoretic explanation:
    Since $Y$ is independent of $(X_1,X_2,\ldots,X_m)$, we have $I\big(Y:(X_1,X_2,\ldots,X_m)\big)=0$.
    Letting $m\to\infty$, we thus obtain $I\big(Y:(X_1,X_2,\ldots)\big)=0$.
    Now, applying the data-processing inequality, we can write
    \begin{align}
        H(Y) &= I(Y:Y) = I\big(Y:\indicator{E}(X_1,X_2,\ldots)\big)
            \leq I\big(Y:(X_1,X_2,\ldots)\big) = 0 \;.
    \end{align}
    Therefore, $Y$ is deterministic, which means $\PP\big(\config{X}\in E)$ is either $0$ or~$1$.

    Nevertheless, the use of measure theory in making this argument precise seems inevitable.
    First, we need to make sense of $I(Y:W)$ when $W$ is not necessarily discrete.
    For the current purpose, we can either define $I(Y:W)\isdef H(Y)-H(Y\given W)$ or use the more general definition as a supremum over finite observations~\cite{Pinsker1964}.
    Second, we have to justify the identity $\lim_{m\to\infty}I\big(Y:(X_1,X_2,\ldots,X_m)\big)=I\big(Y:(X_1,X_2,\ldots)\big)$.  This follows from the martingale convergence theorem (alternatively, see Pinsker's monograph~\cite[Sec.~2.2]{Pinsker1964}).
\end{remark}

\section{Proof of the main result}
\label{sec:main-result:proof}

Given $\config{a}=(a_k)_{k\in\Gamma}$ and $\pi\colon\Gamma\to\Gamma$, we write $\config{a}^\pi$ to denote the family~$(a_{\pi(k)})_{k\in\Gamma}$.  For $K\subseteq\Gamma$, the notation $\config{a}_K$ stands for the subfamily $(a_k)_{k\in K}$.
The notation $X\sim Y$ means $X$ and $Y$ have the same distribution.

\begin{proof}[Proof of Theorem~\ref{thm:main:iid}]
    Let $Y\isdef\indicator{E}(\config{X})$.
    It is enough to show that $Y$ is independent of $\config{X}_K$, for every finite $K\subseteq\Gamma$.  The result would then follow as discussed in Section~\ref{sec:intro:oconnel}.

    So, let $K\subseteq\Gamma$ be finite.
    In order to be able to apply O'Connell's reasoning,
    let us show that $E$ has a sequence $\pi_1,\pi_2,\ldots$ of injective positional symmetries such that the sets $K,\pi_1(K),\pi_2(K),\ldots$ are disjoint.
    We choose $\pi_1,\pi_2,\ldots$ recursively.  Let $\pi_0$ be the identity map on~$\Gamma$.  Having found $\pi_1,\ldots,\pi_{n-1}$, we set $J\isdef\bigcup_{i=0}^{n-1}\pi_i(K)$ and let $\pi_n$ be an injective positional symmetry of $E$ such that $\pi(J)\cap J=\varnothing$.

    Next, let $C\subseteq M^K$ be a measurable set.
    Since $K,\pi_1(K),\pi_2(K),\ldots$ are disjoint, the random variables
    \useshortskip
    \begin{align}
        \indicator{C}(\config{X}),\indicator{C}(\config{X}^{\pi_1}),\indicator{C}(\config{X}^{\pi_2}),\ldots
    \end{align}
    are independent.
    Furthermore,
    \begin{align}
        I\big(Y:\indicator{C}(\config{X}^{\pi_i}_K)\big)=I\big(Y:\indicator{C}(\config{X}_K)\big)
    \end{align}
    for all~$i$.  Indeed, for every $a,b\in\{0,1\}$, we have
    \begin{align}
        \PP(\indicator{E}(\config{X})=a, \indicator{C}(\config{X}_K)=b)
            &= \PP(\indicator{E}(\config{X}^{\pi_i})=a, \indicator{C}(\config{X}^{\pi_i}_K)=b)
            && \text{(because $\config{X}^{\pi_i}\sim\config{X}$)} \\
        &= \PP(\indicator{E}(\config{X})=a, \indicator{C}(\config{X}^{\pi_i}_K)=b)
            && \text{(because $\pi_i$ is a symmetry of $E$)}
    \end{align}
    from which the equality of the mutual informations follows.
    (Here, $\config{X}^{\pi_i}\sim\config{X}$ because $\pi_i$ is injective and $X_k$'s are \ac{iid}.)
    
    Now, by O'Connell's inequality (Lemma~\ref{lem:OConnell}), for every $n$, we have
    \begin{align}
        H(Y) &\geq \sum_{i=1}^n I\big(Y:\indicator{C}(\config{X}^{\pi_i}_K)\big)
            = n I\big(Y:\indicator{C}(\config{X}_K\big) \;.
    \end{align}
    But $H(Y)$ is finite and $n$ is arbitrary, hence $I\big(Y:\indicator{C}(\config{X}_K)\big)=0$.
    Therefore, $Y$ is independent of~$\indicator{C}(\config{X}_K)$.
    Since $C$ is arbitrary, it follows that $Y$ is independent of $\config{X}_K$, as claimed.
    This concludes the proof.
\end{proof}

\section{Applications}
\label{sec:applications}

In this section, we present corollaries of Theorem~\ref{thm:main:iid} in two concrete scenarios: infinite random graphs and renormalization maps.

\subsection{Zero-one law for infinite random graphs}
\label{sec:app:graphs}

Finitary permutations disregard almost any potential structure on the index set.
The power of Theorem~\ref{thm:main:iid} can be illustrated by allowing additional structure on the index set and restricting the required symmetries to those respecting that structure.
Theorem~\ref{thm:01law:shift-invariant} assumes the structure of the group~$\ZZ$ on the index set.
(Note: Theorem~\ref{thm:01law:shift-invariant} remains valid if we replace $\ZZ$ with any other countable group.)
Let us now consider the scenario in which the index set has a graph structure.

We will focus on simple graphs.
A \emph{graph-theoretic property} $P$ is a property that is invariant under graph isomorphisms, that is, if $G$ and $H$ are two isomorphic graphs, then $G$ satisfies $P$ if and only if $H$ does.
For instance, the property of admitting a spanning subtree with maximum degree~$3$ is graph-theoretic.
By a \emph{random subgraph} of a graph $G=(V,E)$ with \emph{parameter $p$}, we mean a random graph obtained from $G$ by removing each edge of $E$ with probability~$1-p$ and keeping it with probability~$p$, independently of the other edges.  For instance, a random subgraph of a complete graph is nothing but an Erdős-Rényi random graph.

\begin{corollary}<thm:main:iid>[Zero-one law for sufficiently symmetric infinite random graphs]
\label{cor:01law:random-subgraph}
    Let $G=(V,E)$ be a countably infinite graph.  Suppose that for every finite set $A\subseteq V$ of vertices, $G$ has an automorphism~$\theta$ such that $\theta(A)\cap A=\varnothing$.  Then, for every $p\in[0,1]$, every graph-theoretic property of a random subgraph of $G$ with parameter~$p$ is deterministic.
\end{corollary}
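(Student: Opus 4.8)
The plan is to deduce this from Theorem~\ref{thm:main:iid} by taking the index set to be the edge set of~$G$. Write $\Gamma\isdef E$ and let $\config{X}\isdef(X_e)_{e\in E}$ be a family of \ac{iid} Bernoulli($p$) random variables, where $X_e=1$ records that the edge~$e$ is kept. For an edge configuration $\config{a}=(a_e)_{e\in E}\in\{0,1\}^E$, let $G[\config{a}]$ denote the spanning subgraph of~$G$ with edge set $\{e\in E: a_e=1\}$, so that $G[\config{X}]$ is precisely a random subgraph of~$G$ with parameter~$p$. Fix a graph-theoretic property~$P$ and set $\mathcal{E}\isdef\{\config{a}\in\{0,1\}^E: G[\config{a}]\text{ satisfies }P\}$. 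The goal is then to show $\PP(\config{X}\in\mathcal{E})\in\{0,1\}$, and for this it suffices to verify the hypothesis of Theorem~\ref{thm:main:iid} for the event~$\mathcal{E}$, with $M=\{0,1\}$.

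First I would record how graph automorphisms act on edge configurations. Each automorphism $\theta$ of~$G$ induces a bijection $\hat\theta\colon E\to E$ via $\hat\theta(\{u,v\})\isdef\{\theta(u),\theta(v)\}$, and one checks directly from the definitions that $G[\config{a}^{\hat\theta}]$ is the image of $G[\config{a}]$ under the vertex permutation~$\theta^{-1}$. Since $\theta^{-1}$ is itself an automorphism of~$G$, it restricts to a graph isomorphism from $G[\config{a}]$ onto $G[\config{a}^{\hat\theta}]$; as $P$ is invariant under graph isomorphism, $\config{a}\in\mathcal{E}$ if and only if $\config{a}^{\hat\theta}\in\mathcal{E}$. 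Hence every $\hat\theta$ is an injective (indeed bijective) positional symmetry of~$\mathcal{E}$.

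Next I would verify the dispersiveness condition. Let $J\subseteq E$ be finite, and let $A\subseteq V$ be the finite set of all endpoints of edges in~$J$. By the hypothesis of the corollary, $G$ has an automorphism $\theta$ with $\theta(A)\cap A=\varnothing$. Every edge of~$J$ has both endpoints in~$A$, whereas every edge of $\hat\theta(J)$ has both endpoints in~$\theta(A)$; since $A\cap\theta(A)=\varnothing$, no edge can belong to both sets, and therefore $\hat\theta(J)\cap J=\varnothing$. Applying Theorem~\ref{thm:main:iid} with the symmetry $\hat\theta$ for each finite~$J$ then gives $\PP(\config{X}\in\mathcal{E})\in\{0,1\}$, which is exactly the claim. (The degenerate cases $p\in\{0,1\}$ and $E=\varnothing$ need no special treatment, as they are covered automatically.)

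There is no substantial obstacle here; the only thing requiring care is the bookkeeping in the first two steps — keeping straight that automorphisms of~$G$ act on \emph{edges}, checking the direction of the induced map $\hat\theta$ so that being a positional symmetry of~$\mathcal{E}$ follows cleanly from the isomorphism-invariance of~$P$, and observing that disjointness of the vertex sets $A$ and $\theta(A)$ forces disjointness of the corresponding sets of edges.
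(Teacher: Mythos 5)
Your proposal is correct and follows essentially the same route as the paper's proof: index the \ac{iid} Bernoulli variables by the edges, lift a vertex automorphism $\theta$ separating the incident vertex set $A$ from $\theta(A)$ to an injective edge map that is a positional symmetry of the property (by isomorphism-invariance) and maps $J$ off itself, then invoke Theorem~\ref{thm:main:iid}. Your extra care about the direction of the induced map (that $\theta^{-1}$ gives the isomorphism $G[\config{a}]\to G[\config{a}^{\hat\theta}]$) is a correct refinement of a point the paper treats more loosely.
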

\begin{proof}
    Consider the family $\config{X}=(X_e)_{e\in E}$ of Bernoulli random variables, where $X_e$ indicates whether the edge $e$ is kept or removed.
    By assumption, these random variables are \ac{iid}. 

    Let $J\subseteq E$ be a finite set.  Let $A\subseteq V$ denote the set of vertices that are incident to $J$.  Clearly, $A$ is also a finite set.
    Let $\theta$ be an automorphism of $G$ satisfying $\theta(A)\cap A=\varnothing$, and define $\pi\colon E\to E$ by $\pi(ab)\isdef\theta(a)\theta(b)$ for every $ab\in E$.  Note that $\theta(a)\theta(b)\in E$ if and only if $ab\in E$ because $\theta$ is an automorphism of $G$.
    Furthermore, $\pi$ is injective because $\theta$ is.
    Lastly, $\pi(J)\cap J=\varnothing$ because the vertices incident to $J$ and $\pi(J)$ are disjoint.
    
    Now, consider a graph-theoretic property $P$.  Restricted to the subgraphs of~$G$, we can view $P$ as a (measurable) subset of $\{0,1\}^E$.
    Since $\theta$ is an automorphism of $G$, a subgraph $H$ of $G$ satisfies $P$ if and only if its image $\theta(H)$ satisfies~$P$.
    It follows that for every $(x_e)_{e\in E}\in\{0,1\}^E$, we have $(x_e)_{e\in E}\in P$ if and only if $(x_{\pi(e)})_{e\in E}\in P$, that is, $\pi$ is a positional symmetry of $P$.
    Therefore, according to Theorem~\ref{thm:main:iid}, the subgraph represented by $(X_e)_{e\in E}$ satisfies $P$ either with probability~$0$ or with probability~$1$.
\end{proof}

When $G$ is highly symmetric, the zero-one law of Corollary~\ref{cor:01law:random-subgraph} sometimes follows from a much stronger uniqueness property.

\begin{example}[Infinite Erdős-Rényi random graph]
\label{exp:graph:infinite-erdos-renyi}
    An infinite Erdős-Rényi random graph with parameter~$p$ is a random subgraph, with parameter~$p$, of the complete graph $K_\NN$ on~$\NN$.
    The complete graph $K_\NN$ clearly satisfies the hypothesis of Corollary~\ref{cor:01law:random-subgraph}.
    However, the resulting zero-one law is a trivial consequence of the well-known fact that, every two (non-degenerate) infinite Erdős-Rényi random graphs in the same probability space are almost surely isomorphic (see \ac{eg}, the survey paper by Cameron~\cite{Cameron2013}). 
    %
\end{example}

\noindent The uniqueness principle mentioned in the latter example holds for other examples of infinite random graphs such as the universal triangle-free graph~\cite{Henson1971}.

Another situation in which the zero-one law of Corollary~\ref{cor:01law:random-subgraph} follows from a simpler result is when $G$ is a Cayley graph, possibly with additional ``decorations'' that maintain shift-invariance.
The ``decorations'' can, for instance, involve gluing a fixed finite or countably infinite graph to each vertex, or replacing each edge corresponding to a fixed generator with a copy of a fixed finite graph.  For the subgraphs of such ``decorated'' Cayley graphs, all graph-theoretic properties are shift-invariant, and hence the result follows from a version of Theorem~\ref{thm:01law:shift-invariant} in which the group~$\ZZ$ is replaced with another countable group.





Let us now give a fairly general construction of a family of graphs that have just enough symmetries for them to satisfy the hypothesis of Corollary~\ref{cor:01law:random-subgraph}.  

\begin{example}[Infinite graphs with non-trivial zero-one law]
\label{exp:graph:with-symmetries}
    We construct a countably infinite graph~$G$ in a recursive fashion.
    See Figure~\ref{fig:graph:with-symmetries} for an illustration.
    
    We start with an arbitrary finite graph~$G_0$.
    To construct $G_n$ from $G_{n-1}$, we take two copies $G_{n-1}^a$ and $G_{n-1}^b$ of $G_{n-1}$ and another arbitrary finite graph~$L_n$, all disjoint from one another.
    Then, for each vertex $v$ in $L_n$, we arbitrarily choose between connecting $v$ to either all or none of the vertices in $G_{n-1}^a\cup G_{n-1}^b$.  We may also add edges between $G_{n-1}^a$ and $G_{n-1}^b$ in a symmetric fashion: if we add an edge between $u^a$ and $v^b$, we must also add an edge between $v^a$ and $u^b$.
    We let $G$ be the inverse limit of the sequence $G_0,G_1,\ldots$ where $G_{n-1}$ is identified with one of its two copies in $G_n$.

    To see that $G$ satisfies the hypothesis of Corollary~\ref{cor:01law:random-subgraph}, let $A$ be a finite set of vertices in~$G$.
    Let $n$ be the smallest value such that $A$ is included in the vertex set of one of the copies of $G_n$ in $G_{n+1}$.  By construction, $G_{n+1}$ has an automorphism~$\theta_n$ that exchanges the two copies of $G_n$ (in particular, $\theta_n(A)\cap A=\varnothing$) and leaves the other vertices unchanged.  This automorphism in turn extends to an automorphism $\theta$ of $G$ that leaves the vertices outside $G_{n+1}$ unchanged.
    
    We conclude that for a random subgraph of $G$, all the graph-theoretic properties are deterministic.
\end{example}

\begin{figure}
    \begin{center}
        \begin{tikzpicture}[baseline,scale=0.4]   
            \graphGGGGG{0}{0}{1}{1}{}

            \begin{scope}[blue!50,dashed,thick,rounded corners=1ex,overlay]
                \draw
                    ($(aaaaa)+(-0.4\graphunit,+0.4\graphunit)$) rectangle
                    ($(aaaaa)+(+0.4\graphunit,-0.4\graphunit)$);
                \draw
                    ($(aaaaa)+(-0.6\graphunit,+0.6\graphunit)$) rectangle
                    ($(aaaab)+(+0.6\graphunit,-0.6\graphunit)$);
                \draw
                    ($(aaaaa)+(-0.8\graphunit,+0.8\graphunit)$) rectangle
                    ($(aaabb)+(+0.8\graphunit,-0.8\graphunit)$);
                \draw
                    ($(aaaaa)+(-1.0\graphunit,+1.0\graphunit)$) rectangle
                    ($(aabba)+(+1.0\graphunit,-1.0\graphunit)$);
                \draw
                    ($(aaaaa)+(-1.2\graphunit,+1.2\graphunit)$) rectangle
                    ($(abbaa)+(+1.2\graphunit,-1.2\graphunit)$);
                \draw
                    ($(aaaaa)+(-1.4\graphunit,+1.4\graphunit)$) rectangle
                    ($(bbaaa)+(+1.4\graphunit,-1.4\graphunit)$);
            \end{scope}
        \end{tikzpicture}
    \end{center}
\caption{%
    A few iterations of the recursive construction from Example~\ref{exp:graph:with-symmetries} with arbitrary choices.
    The dashed rectangles indicate the graphs $G_0$, $G_2$, \ldots, $G_5$ included in one another.
}
\label{fig:graph:with-symmetries}
\end{figure}
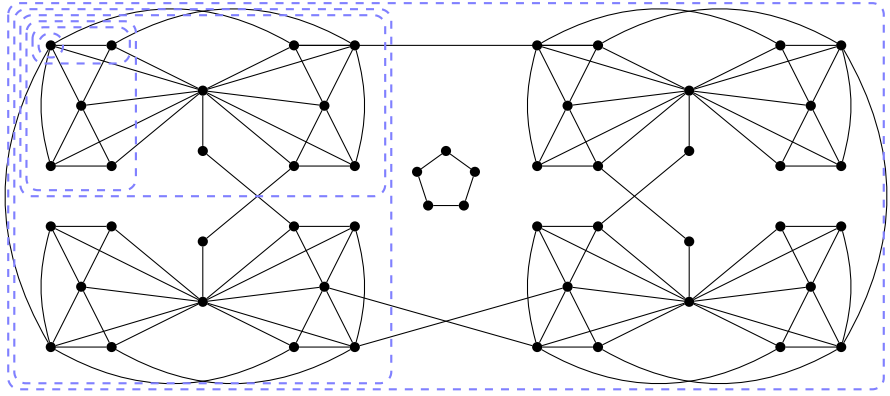


    


\subsection{Zero-one law for renormalization transformations}
\label{sec:app:renormalization}

\begin{example}[Simple majority renormalization]
\label{exp:renormalization:majority}
    Given a sequence $\config{a}=(a_k)_{k\in\ZZ}$ with values from $\{\symb{0},\symb{1}\}$, we can construct $\config{a}^0,\config{a}^1,\config{a}^2,\ldots$ as follows.  We start by setting $\config{a}^0\isdef\config{a}$.  For $n\geq 1$, we recursively define $\config{a}^n=(a^n_k)_{k\in\ZZ}$ to be the sequence in which 
    \begin{align}
        a^n_k &\isdef \maj\big(a^{n-1}_{3k-1},a^{n-1}_{3k},a^{n-1}_{3k+1}\big)
    \end{align}
    for each~$k\in\ZZ$.
    In other words, $\config{a}^n$ is obtained by grouping the elements of $\config{a}^{n-1}$ into blocks of size~$3$ (centered at positions $3k$, for $k\in\ZZ$) and replacing each block with the majority value within that block
    (see Figure~\ref{fig:renormalization:majority}).
    Let $\config{a}^*\isdef(a^0_0,a^1_0,a^2_0,\ldots)$ be the sequence  of values observed at position~$0$ during the above process.  We call $\config{a}^*$ the \emph{trace} sequence associated to~$\config{a}$.
    
    Now, let $\config{X}=(X_k)_{k\in\ZZ}$ be a sequence of \ac{iid} Bernoulli random variables with parameter~$p$.
    We are interested in the probability of the event that the trace of~$\config{X}$ eventually stabilizes at $\symb{1}$.
    
    \begin{enumerate}[label={(\alph*)}]
        \item \label{item:renormalization:majority:01law}
            Let us use Theorem~\ref{thm:main:iid} to argue that whether the trace of $\config{X}$ eventually stabilizes at~$\symb{1}$ or not is deterministic.
    
            First, note that $a^n_k$ is function of a finite number of elements from $\config{a}$.  Let $B^n_k\subseteq\ZZ$ denote the set of indices of those elements.
            For instance, $B^0_0=\{0\}$, $B^1_0=\{-1,0,1\}$, $B^2_0=\{-4,-3,\ldots,4\}$, and so on.
            Observe that $B^n_k$ is a disjoint union of the three blocks $B^{n-1}_{3k-1},B^{n-1}_{3k},B^{n-1}_{3k+1}$.
        
            Let $E_{\symb{1}}\subseteq\{\symb{0},\symb{1}\}^\ZZ$ denote the event that the trace eventually stabilizes at $\symb{1}$, respectively, that is,
            \begin{align}
                E_{\symb{1}} &\isdef \{\config{a}: \exists m\forall n\geq m, a^n_0=\symb{1}\} \;.
            \end{align}
            Given a finite set $J\subseteq\ZZ$, we let $n$ be the smallest number such that $B^n_0\supseteq J$.  Now, let $\pi\colon\ZZ\to\ZZ$ be a permutation that exchanges the blocks $B^n_0$ and $B^n_1$ and leaves every other element unchanged.  Clearly, $\pi(J)\cap J=\varnothing$.  Moreover, note that exchanging $B^n_0$ and $B^n_1$ only switches the values of $a^n_0$ and $a^n_1$ and preserves the majority value among $a^n_{-1},a^n_0,a^n_1$.  In particular, it keeps the values of $\config{a}^m$ for $m\geq n+1$ unchanged.  This means that $\pi$ is an (injective) positional symmetry of~$E_{\symb{1}}$.
            Therefore, $\PP(\config{X}\in E_{\symb{1}})\in\{0,1\}$ by Theorem~\ref{thm:main:iid}.
            
            By symmetry, we also have $\PP(\config{X}\in E_{\symb{0}})\in\{0,1\}$, where $E_{\symb{0}}$ is the event that the trace eventually stabilizes at~$\symb{0}$.
            In conclusion, we have narrowed down the eventual behaviour of the trace sequence into three possibilities:
            \begin{itemize}
                \item Almost surely, the trace of $\config{X}$ stabilizes at~$\symb{1}$.
                \item Almost surely, the trace of $\config{X}$ stabilizes at~$\symb{0}$.
                \item Almost surely, the trace of $\config{X}$ does not stabilize.
            \end{itemize}
        \item \label{item:renormalization:majority:dynaics}
            In fact, this example is simple enough that we can explicitly identify the values of~$p$ for which the trace sequence stabilizes at~$\symb{0}$, $\symb{1}$, or neither.
    
            Observe that if $\config{X}$ is an \ac{iid} sequence, then so is $\config{X}^n$ for each~$n$.
            Furthermore, if $X_k$'s have Bernoulli parameter~$p$, then we have $\PP(X^n_0=\symb{1})=f^n(p)$ where
            $f(p)\isdef 3p^2(1-p)+p^3$.
            Now, note that $p=1$ is an attractive fixed point of the continuous map $f\colon[0,1]\to[0,1]$ with $(\nicefrac{1}{2},1]$ as its basin of attraction (see Figure~\ref{fig:renormalization:majority:dynamics}).
            Therefore, $\lim_{n\to\infty}\PP(X^n_0=\symb{1})=1$ when $p>\nicefrac{1}{2}$.
            In fact, since $f$ is continuously differentiable and $p=1$ is a hyperbolic fixed point with $\abs{f'(1)}<1$, for each $p>\nicefrac{1}{2}$ we have
            \begin{align}
                \sum_{n=0}^\infty \PP(X^n_0\neq\symb{1})=\sum_{n=0}^\infty \abs[\big]{1-f^n(p)} < \infty
            \end{align}
            (see \ac{eg}, Section~1.4 in the book by Devaney~\cite{Devaney2018}).
            Therefore, by the Borel-Cantelli lemma, we have
            $\PP(\config{X}\in E_{\symb{1}})=1-\PP(\text{$X^n_0\neq\symb{1}$ for infinitely many~$n$})=1$ when $p>\nicefrac{1}{2}$.
            Similarly, $\PP(\config{X}\in E_{\symb{0}})=1$ when $p<\nicefrac{1}{2}$.
            Lastly, note that when $p=\nicefrac{1}{2}$, we have $\PP(\config{X}\in E_{\symb{1}})=\PP(\config{X}\in E_{\symb{0}})$ by symmetry.
            The zero-one law of part~\ref{item:renormalization:majority:01law} thus implies $\PP(\config{X}\in E_{\symb{1}})=\PP(\config{X}\in E_{\symb{0}})=0$ in this case.
            \qedhere
    \end{enumerate}
\end{example}

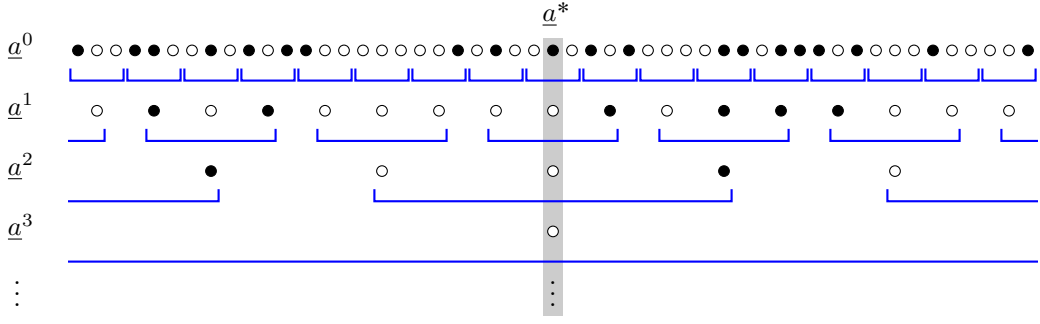
\begin{figure}
    \begin{center}
        \begin{tikzpicture}[baseline,xscale=0.25,yscale=0.8]   
            \begin{scope}
                \clip (-25.5,0.5) rectangle (25.5,-4.4);

                \fill[gray!40] (-0.5,0.2) rectangle (0.5,-5);
                
                \foreach \i in {-40,-39,...,40} {
                    \pgfmathparse{int(\rnconfigI[\i+40])}
    			    \edef\myval{\pgfmathresult}
                    \ifnumequal{\myval}{1}{
                        \node[stateI] (i0.\i) at (\i,0) {};
                    }{
                        \node[stateO] (i0.\i) at (\i,0) {};
                    }
                }
                \foreach \i in {-13,-12,...,13} {
                    \pgfmathparse{int(\rnconfigII[\i+13])}
    			    \edef\myval{\pgfmathresult}
                    \ifnumequal{\myval}{1}{
                        \node[stateI] (i1.\i) at (3*\i,-1) {};
                    }{
                        \node[stateO] (i1.\i) at (3*\i,-1) {};
                    }
                }
                \foreach \i in {-4,-3,...,4} {
                    \pgfmathparse{int(\rnconfigIII[\i+4])}
    			    \edef\myval{\pgfmathresult}
                    \ifnumequal{\myval}{1}{
                        \node[stateI] (i2.\i) at (9*\i,-2) {};
                    }{
                        \node[stateO] (i2.\i) at (9*\i,-2) {};
                    }
                }
                \foreach \i in {-1,0,1} {
                    \pgfmathparse{int(\rnconfigIV[\i+1])}
    			    \edef\myval{\pgfmathresult}
                    \ifnumequal{\myval}{1}{
                        \node[stateI] (i3.\i) at (27*\i,-3) {};
                    }{
                        \node[stateO] (i3.\i) at (27*\i,-3) {};
                    }
                }
                \foreach \i in {-13,-12,...,13} {
                    \draw[thick,blue] (-3*\i-1-0.4,-0.3) -- ++(0,-0.2) -- ++(2+2*0.4,0) -- ++(0,0.2);
                }
                \foreach \i in {-4,-3,...,4} {
                    \draw[thick,blue] (-9*\i-3-0.4,-1-0.3) -- ++(0,-0.2) -- ++(6+2*0.4,0) -- ++(0,0.2);
                }
                \foreach \i in {-1,0,1} {
                    \draw[thick,blue] (-27*\i-9-0.4,-2-0.3) -- ++(0,-0.2) -- ++(18+2*0.4,0) -- ++(0,0.2);
                }
                \draw[thick,blue] (-81*0-27-0.4,-3-0.3) -- ++(0,-0.2) -- ++(54+2*0.4,0) -- ++(0,0.2);
            \end{scope}

            \node at (0.2,0.6) {$\config{a}^*$};
            \node at (0,-4+0.1) {$\vdots$};

            \begin{scope}[overlay]
                \foreach \j in {0,1,2,3} {
                    \node at (-28,-\j+0.07) {$\config{a}^\j$};
                }
                \node at (-28-0.2,-4+0.1) {$\vdots$};
            \end{scope}
        \end{tikzpicture}
    \end{center}
\caption{%
    The renormalization process of Example~\ref{exp:renormalization:majority}.
}
\label{fig:renormalization:majority}
\end{figure}

\begin{figure}[!h]
    \begin{center}
        \begin{tikzpicture}[baseline,>=stealth,scale=5]
            \draw[->] (-0.1,0) -- (1.1,0) node[right,overlay] {$p$};
            \draw[->] (0,-0.1) -- (0,1.1); 

            \draw[gray,dashed] (0.5,-\xplottick) node[below,black] {\footnotesize $0.5$}
                -- (0.5,0.5) -- (-\xplottick,0.5) node[left,black] {\footnotesize $0.5$};

            \draw[gray,dashed] (1,-\xplottick) node[below,black] {\footnotesize $1$}
                -- (1,1) -- (-\xplottick,1) node[left,black] {\footnotesize $1$};

            \draw[gray] (-0.07,-0.07) -- (1.05,1.05);

            \draw[domain=-0.07:1.1,smooth,variable=\p,ultra thick,blue]
                plot ({\p},{3*\p*\p*(1-\p)+\p*\p*\p}) node[right,black,overlay] {$f(p)$};

            \draw[red,thick,rounded corners=0.5pt] (0.6,0) -- (0.6,0.648) -- (0.648,0.648)
                -- (0.648,0.716) -- (0.716,0.716) -- (0.716,0.803) -- (0.803,0.803)
                -- (0.803,0.899) -- (0.899,0.899) -- (0.899,0.972) -- (0.972,0.972)
                -- (0.972,0.998) -- (0.998,0.998);
        \end{tikzpicture}

    \end{center}
\caption{%
    The dynamics of $p\mapsto f(p)$ in Example~\ref{exp:renormalization:majority}.
    As $n\to\infty$, we have $f^n(p)\to 1$ if $p>\nicefrac{1}{2}$ and $f^n(p)\to 0$ if $p<\nicefrac{1}{2}$.
}
\label{fig:renormalization:majority:dynamics}
\end{figure}

The argument in part~\ref{item:renormalization:majority:01law} of Example~\ref{exp:renormalization:majority} can be extended to a general setting in which the alphabet~$\{\symb{0},\symb{1}\}$ is replaced with an arbitrary measurable space and the majority renormalization map is replaced with any simple symmetric renormalization map, as formulated below.

Let $M$ be a measurable space.
A map $T\colon M^\ZZ\to M^\ZZ$ is called a \emph{renormalization map} if there exist integers $\ell,r\geq 0$ and a measurable map $\varphi\colon M^{2\ell+2r+1}\to M$ such that
\begin{align}
    T(\config{a})_k &= \varphi\big(a_{(2\ell+1)k-\ell-r},a_{(2\ell+1)k-\ell-r+1},\ldots,a_{(2\ell+1)k+\ell+r}\big)
\end{align}
for every $\config{a}\in M^\ZZ$ and $k\in\ZZ$.
The map $\varphi$ is called the \emph{local rule} of~$T$, the value $2\ell+1$ its \emph{block size}, and $r$ its \emph{interaction range}.
When $r=0$, we say that $T$ is \emph{simple}.
By a \emph{positional symmetry} of a map $\varphi\colon M^n\to M$, we mean a permutation $\pi$ of $\{1,2,\ldots,n\}$ such that $\varphi(a_{\pi(1)},a_{\pi(2)},\ldots,a_{\pi(n)})=\varphi(a_1,a_2,\ldots,a_n)$ for all $a_1,a_2,\ldots,a_n\in M$.  
We say that a positional symmetry $\pi$ of $\varphi$ \emph{moves} its $i$th argument if $\pi(i)\neq i$.
The \emph{trace} of a sequence $\config{a}\in M^\ZZ$ under a map $T\colon M^\ZZ\to M^\ZZ$ is the sequence $\config{a}^T\isdef(T^n(\config{a})_0)_{n\in\NN_0}$.

\begin{corollary}<thm:main:iid>[Zero-one law for the trace of simple symmetric renormalization maps]
\label{cor:01law:renormalization}
    Let $M$ be a measurable space and $T\colon M^\ZZ\to M^\ZZ$ a simple renormalization map with local rule $\varphi\colon M^{2\ell+1}\to M$.
    Suppose that $\varphi$ has a positional symmetry that moves its central argument.
    Let $\config{X}=(X_k)_{k\in\ZZ}$ be a sequence of \ac{iid} random variables with values from~$M$, and let $\config{X}^T$ be the trace of $\config{X}$ under~$T$.
    Then, for every tail event $E\subseteq M^{\NN_0}$, we have $\PP(\config{X}^T\in E)\in\{0,1\}$.
\end{corollary}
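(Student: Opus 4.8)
The plan is to derive this corollary from Theorem~\ref{thm:main:iid} by pulling the tail event $E$ back along the trace map. Concretely, I would set
\begin{align}
    \widetilde{E} \isdef \{\config{a}\in M^\ZZ : \config{a}^T\in E\} ,
\end{align}
which is measurable because $\config{a}\mapsto\config{a}^T=(T^n(\config{a})_0)_{n}$ is a composition of the measurable map $T$ with coordinate projections. Since $\config{X}^T\in E$ exactly when $\config{X}\in\widetilde{E}$ and the $X_k$ are \ac{iid}, it suffices to verify the hypothesis of Theorem~\ref{thm:main:iid} for $\widetilde{E}$: for every finite $J\subseteq\ZZ$ there must be an injective positional symmetry $\pi\colon\ZZ\to\ZZ$ of $\widetilde{E}$ with $\pi(J)\cap J=\varnothing$.

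Following part~\ref{item:renormalization:majority:01law} of Example~\ref{exp:renormalization:majority}, I would first record the dependency structure. For $n\geq0$ and $k\in\ZZ$, let $B^n_k\subseteq\ZZ$ be the finite set of coordinates on which $T^n(\config{a})_k$ depends. Because $T$ is \emph{simple} (interaction range $r=0$), one has $B^0_k=\{k\}$ and $B^n_k=B^{n-1}_{(2\ell+1)k-\ell}\cup\cdots\cup B^{n-1}_{(2\ell+1)k+\ell}$, with this union \emph{disjoint}. A short induction then gives: each $B^n_k$ is a block of $(2\ell+1)^n$ consecutive integers; distinct $B^n_k$ are pairwise disjoint; $B^n_0\subseteq B^{n+1}_0$; and $\bigcup_n B^n_0=\ZZ$. (The last point uses $\ell\geq1$, which is forced by the hypothesis that $\varphi$ has a positional symmetry moving its central argument.)

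Now fix a finite $J\subseteq\ZZ$ and choose $n$ with $J\subseteq B^n_0$. Let $\rho$ be a positional symmetry of $\varphi$ with $\rho(\ell+1)\neq\ell+1$; re-indexing the $2\ell+1$ arguments of $\varphi$ by $\{-\ell,\dots,\ell\}$ (shifting by $\ell+1$), I view $\rho$ as a permutation of $\{-\ell,\dots,\ell\}$ with $\rho(0)\neq0$. Define $\pi\colon\ZZ\to\ZZ$ to be the identity outside $B^{n+1}_0=B^n_{-\ell}\cup\cdots\cup B^n_\ell$ and, on $B^{n+1}_0$, to carry each sub-block $B^n_j$ order-preservingly onto $B^n_{\rho(j)}$. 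Then $\pi$ is a bijection of $\ZZ$, hence injective, and $\pi(J)\subseteq\pi(B^n_0)=B^n_{\rho(0)}$ is disjoint from $B^n_0\supseteq J$, so $\pi(J)\cap J=\varnothing$. To see that $\pi$ is a positional symmetry of $\widetilde{E}$, I would establish $T^{n+1}(\config{a}^\pi)=T^{n+1}(\config{a})$ for every $\config{a}$: at a coordinate $j\neq0$ the block $B^{n+1}_j$ is disjoint from $B^{n+1}_0$, so $\config{a}^\pi$ and $\config{a}$ agree on it; at coordinate $0$, the order-preserving identification of $B^n_j$ with $B^n_{\rho(j)}$ together with the spatial homogeneity of $T$ gives $T^n(\config{a}^\pi)_j=T^n(\config{a})_{\rho(j)}$ for $j\in\{-\ell,\dots,\ell\}$, so, writing $c_j\isdef T^n(\config{a})_j$,
\begin{align}
    T^{n+1}(\config{a}^\pi)_0 = \varphi\big(c_{\rho(-\ell)},\dots,c_{\rho(\ell)}\big) = \varphi\big(c_{-\ell},\dots,c_\ell\big) = T^{n+1}(\config{a})_0 ,
\end{align}
the middle equality being exactly that $\rho$ is a positional symmetry of $\varphi$. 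Hence $T^m(\config{a}^\pi)=T^m(\config{a})$ for all $m\geq n+1$, so the traces $(\config{a}^\pi)^T$ and $\config{a}^T$ agree from index $n+1$ onward; since $E$ is a tail event, $\config{a}\in\widetilde{E}\iff\config{a}^\pi\in\widetilde{E}$. Theorem~\ref{thm:main:iid} applied to $\widetilde{E}$ then gives $\PP(\config{X}^T\in E)=\PP(\config{X}\in\widetilde{E})\in\{0,1\}$.

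I expect the one genuinely delicate step to be the invariance $T^{n+1}(\config{a}^\pi)=T^{n+1}(\config{a})$ — that is, promoting a symmetry of the \emph{finite} rule $\varphi$ to an honest positional map on $\ZZ$ without disturbing anything outside $B^{n+1}_0$. This is precisely where simplicity ($r=0$) is used: it makes the dependency sets $B^n_k$ pairwise disjoint intervals, so one can shuffle the level-$n$ sub-blocks inside $B^{n+1}_0$ by a permutation of $\ZZ$ that leaves all other level-$(n+1)$ coordinates untouched; if $r>0$ these blocks overlap and no such clean move is available.
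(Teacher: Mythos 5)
Your proposal is correct and follows essentially the same route as the paper, which simply declares the proof ``analogous to the argument given in part~(a) of Example~\ref{exp:renormalization:majority}'': you pull the tail event back along the trace map, use the disjoint self-similar dependency blocks $B^n_k$ afforded by simplicity ($r=0$), and promote the local symmetry $\rho$ of $\varphi$ to a block permutation $\pi$ of $\ZZ$ that fixes $T^{n+1}(\config{a})$ and moves $J$ off itself. Your explicit verification of spatial homogeneity ($T^n(\config{a})_k$ being the same function of $\config{a}_{B^n_k}$ for every $k$) and the observation that the symmetry hypothesis forces $\ell\geq 1$ are exactly the details the paper leaves implicit.
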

\noindent The proof is analogous to the argument given in part~\ref{item:renormalization:majority:01law} of Example~\ref{exp:renormalization:majority}.

The argument in part~\ref{item:renormalization:majority:dynaics} of Example~\ref{exp:renormalization:majority} suggests that, in Corollary~\ref{cor:01law:renormalization}, the symmetry assumption on the local rule might be superfluous.
It would be interesting to know whether Corollary~\ref{cor:01law:renormalization} would still hold if we relaxed the \ac{iid} condition or if we allowed $T$ to be non-simple. 

\section{Relaxing the \ac{iid} condition}
\label{sec:non-iid}

In Theorem~\ref{thm:main:iid}, we exploited the following two properties of the random variables:
\begin{enumerate}[label={\textup{(\roman*)}}]
    \item $\config{X}_J$ and $\config{X}^\pi_J$ are independent,
    \item The families
        $\config{X}$ and $\config{X}^\pi$ have the same distribution.
\end{enumerate}
The former was ensured by the independence of $X_k$'s and the assumption $J\cap\pi(J)=\varnothing$; the latter was guaranteed by the injectivity of $\pi$ and the \ac{iid} assumption.
The main result of this section (Theorem~\ref{thm:main:non-iid}) is a generalization of Theorem~\ref{thm:main:iid} in which ``independence'' is replaced with ``almost independence'' and ``having the same distribution'' is replaced with ``having close distributions.''
We formulate this result in Section~\ref{sec:non-iid:general:statement} and prove it in Section~\ref{sec:non-iid:general:proof}.
In Sections~\ref{sec:non-iid:independent} and~\ref{sec:non-iid:k-mixing}, we prove Theorems~\ref{thm:main:independent} and~\ref{thm:main:k-mixing} as corollaries of the general result.

\subsection{General result: statement}
\label{sec:non-iid:general:statement}

Before stating the result, let us provide mathematical formulations of what we mean by ``almost independence'' and ``having close distributions.''



\begin{definition}[Almost independence]
\label{def:almost-independence}
    Let $X$ and $Y$ be two random variables taking values in measurable spaces $\mathcal{X}$ and $\mathcal{Y}$, respectively.  Let $\varepsilon>0$.
    We say that $X$ and $Y$ are \emph{$\varepsilon$-dependent} if
    \begin{align}
        \sup_{A,B} \abs[\big]{\PP(\text{$X\in A$ and $Y\in B$}) - \PP(X\in A)\PP(Y\in B)} &< \varepsilon \;,
    \end{align}
    where the supremum is over all measurable sets $A\subseteq\mathcal{X}$ and $B\subseteq\mathcal{Y}$.
\end{definition}
\noindent Thus, $X$ and $Y$ are independent if and only if they are $\varepsilon$-dependent for every~$\varepsilon>0$.  Let us observe that if $X$ and $Y$ are $\varepsilon$-dependent, then so are $f(X)$ and $g(Y)$ for every two measurable functions $f$ and $g$.


We use ``closeness'' in the sense of total variation distance, although a somewhat weaker, more technical notion would be sufficient for the proof.
\begin{definition}[Closeness of distributions]
\label{def:closeness-of-distributions}
    Let $\mu$ and $\nu$ be two probability measures on the same measurable space~$\mathcal{X}$.
    The \emph{total variaiton distance} between $\mu$ and $\nu$ is defined as
    \begin{align}
        \norm{\mu-\nu}_\TV &\isdef \sup_A\abs{\mu(A)-\nu(A)} \;,
    \end{align}
    where the supremum is taken over all measurable sets $A\subseteq\mathcal{X}$.
    Given $\delta>0$, we say that $\mu$ and $\nu$ are \emph{$\delta$-close} if
    $\norm{\mu-\nu}_\TV<\delta$.
\end{definition}
\noindent Note that if the distributions of two random variables $X$ and $Y$ are $\delta$-close, then so are the distributions of $f(X)$ and $f(Y)$ for every measurable function $f$.

We are now ready to state the most general result of this paper.
By a \emph{finitary observable} we mean a measurable map with a finite range.
\begin{theorem}[Zero-one law for non-\ac{iid} RVs]
\label{thm:main:non-iid}
    Let $\config{X}\isdef (X_k)_{k\in\Gamma}$ be a countable family of random variables with values from a measurable space~$M$.
    Consider an event $E\subseteq M^\Gamma$,
    and suppose that for every finite set $J\subseteq\Gamma$, every finitary observable $\varphi\colon M^J\to\Sigma$, and every $\varepsilon,\delta>0$, the event~$E$ has a positional symmetry $\pi\colon\Gamma\to\Gamma$ such that
    \begin{enumerate}[label={\textup{(\roman*)}}]
        \item \label{item:thm:non-iid:almost-independence}
            $\varphi(\config{X}_J)$ and $\varphi(\config{X}^\pi_J)$ are $\varepsilon$-dependent,
        \item \label{item:thm:non-iid:closeness}
            The distributions of $\config{X}$ and $\config{X}^\pi\isdef(X_{\pi(k)})_{k\in\Gamma}$ are $\delta$-close.
    \end{enumerate}
    Then, $\PP(\config{X}\in E)\in\{0,1\}$.
\end{theorem}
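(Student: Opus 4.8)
The plan is to rerun O'Connell's argument from the proof of Theorem~\ref{thm:main:iid}, but with exact independence and exact equality of distributions replaced by $\varepsilon$-dependence and $\delta$-closeness (Definitions~\ref{def:almost-independence} and~\ref{def:closeness-of-distributions}), keeping track of the accumulated error. Set $Y\isdef\indicator{E}(\config{X})$. As in the proof of Theorem~\ref{thm:main:iid}, it suffices to show that $Y$ is independent of $\config{X}_K$ for every finite $K\subseteq\Gamma$, for then Lemma~\ref{lem:independence} forces $Y$ to be independent of $\config{X}$, hence of itself, and $\PP(\config{X}\in E)\in\{0,1\}$ follows. Fix a finite $K$; since the indicator of a measurable subset of $M^K$ is a finitary observable, it is in turn enough to fix an arbitrary finitary observable $\varphi\colon M^K\to\Sigma$, put $Z_0\isdef\varphi(\config{X}_K)$, and prove $I(Y:Z_0)=0$.

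The core of the proof is a recursive construction. We build positional symmetries $\pi_1,\pi_2,\ldots$ of $E$ and positive reals $\varepsilon_1,\delta_1,\varepsilon_2,\delta_2,\ldots$ so that, writing $Z_i\isdef\varphi(\config{X}^{\pi_i}_K)$, for every $i\ge 1$ we have (a) $Z_i$ is $\varepsilon_i$-dependent of $(Z_0,Z_1,\ldots,Z_{i-1})$, and (b) the distribution of $\config{X}^{\pi_i}$ is $\delta_i$-close to that of $\config{X}$. Given $\pi_1,\ldots,\pi_{i-1}$, set $J_i\isdef K\cup\pi_1(K)\cup\cdots\cup\pi_{i-1}(K)$, a finite set, and let $\psi_i\colon M^{J_i}\to\Sigma^i$ be the finitary observable whose value at $\config{b}$ is $\big(\varphi(\config{b}_K),\varphi((b_{\pi_1(k)})_{k\in K}),\ldots,\varphi((b_{\pi_{i-1}(k)})_{k\in K})\big)$; then $\psi_i(\config{X}_{J_i})=(Z_0,\ldots,Z_{i-1})$, and for any positional symmetry $\pi_i$ the first coordinate of $\psi_i(\config{X}^{\pi_i}_{J_i})$ equals $Z_i$. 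Choosing $\varepsilon_i,\delta_i$ small (the precise requirement is fixed below) and applying the hypothesis to $J_i$, $\psi_i$, $\varepsilon_i$ and $\delta_i$ yields a positional symmetry $\pi_i$ of $E$ with $\psi_i(\config{X}_{J_i})$ and $\psi_i(\config{X}^{\pi_i}_{J_i})$ being $\varepsilon_i$-dependent and with the distributions of $\config{X}$ and $\config{X}^{\pi_i}$ being $\delta_i$-close; (b) is immediate, and (a) follows because $Z_i$ is a measurable function of $\psi_i(\config{X}^{\pi_i}_{J_i})$ while $\psi_i(\config{X}_{J_i})=(Z_0,\ldots,Z_{i-1})$, and $\varepsilon$-dependence is preserved under applying a measurable map to each of the two arguments.

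Now fix $n$ and run an approximate form of O'Connell's inequality (Lemma~\ref{lem:OConnell}). Since $I(Y:W)\le H(Y)\le\log 2$ for every $W$, the chain rule for mutual information gives
\begin{align}
    \log 2 \ge I\big(Y:(Z_0,Z_1,\ldots,Z_n)\big) = \sum_{i=0}^{n} I\big(Y:Z_i\mid Z_0,\ldots,Z_{i-1}\big) \;,
\end{align}
and for $i\ge 1$ the chain rule again gives $I(Y:Z_i\mid Z_{<i}) = I\big((Y,Z_{<i}):Z_i\big) - I(Z_{<i}:Z_i)\ge I(Y:Z_i)-I(Z_{<i}:Z_i)$. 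Two continuity estimates now enter, both instances of the uniform continuity of Shannon entropy on a fixed finite set. First, $Z_{<i}$ and $Z_i$ take values in the fixed finite sets $\Sigma^i$ and $\Sigma$ and are $\varepsilon_i$-dependent, so $I(Z_{<i}:Z_i)$ is as small as we like once $\varepsilon_i$ is small enough. Second, because $\pi_i$ is a positional symmetry of $E$ we have $\indicator{E}(\config{X}^{\pi_i})=\indicator{E}(\config{X})=Y$ pointwise, so $(Y,Z_i)$ and $(Y,Z_0)$ are the images of $\config{X}^{\pi_i}$ and of $\config{X}$ under the single measurable map $\config{b}\mapsto(\indicator{E}(\config{b}),\varphi(\config{b}_K))$, hence have $\delta_i$-close distributions on the fixed finite space $\{0,1\}\times\Sigma$, and so $\abs{I(Y:Z_i)-I(Y:Z_0)}$ is as small as we like once $\delta_i$ is small enough. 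Choosing, at the $i$-th step, $\varepsilon_i$ and $\delta_i$ small enough that $I(Z_{<i}:Z_i)+\abs{I(Y:Z_i)-I(Y:Z_0)}<2^{-i}$, we obtain $I(Y:Z_i\mid Z_{<i})\ge I(Y:Z_0)-2^{-i}$ for $i\ge 1$, whence $\log 2\ge (n+1)\,I(Y:Z_0)-1$. As $n$ is arbitrary, $I(Y:Z_0)=0$, i.e.\ $Y$ and $Z_0$ are independent, which finishes the argument.

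The one genuinely new ingredient compared with O'Connell's original proof is this approximate version of Lemma~\ref{lem:OConnell}, and the delicate point is not the inequality itself but keeping the errors summable. The resolution is that the ranges of $Z_{<i}$ and of $(Y,Z_i)$, although they grow with $i$, are known \emph{before} $\varepsilon_i$ and $\delta_i$ must be chosen, so at every step there is room to force the $i$-th error below $2^{-i}$; and since $H(Y)\le\log 2$ is bounded while the number of terms $n$ is not, this still pins $I(Y:Z_0)$ to $0$. Apart from Lemma~\ref{lem:independence} at the very end, the only measure-theoretic input remains the continuity of Shannon entropy on finite alphabets, together with the stability of $\varepsilon$-dependence and $\delta$-closeness under measurable maps recorded after Definitions~\ref{def:almost-independence} and~\ref{def:closeness-of-distributions}.
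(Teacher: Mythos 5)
Your proof is correct and follows essentially the same route as the paper's: a recursive choice of positional symmetries obtained by feeding the hypothesis a finitary observable that encodes all previously constructed $Z_j$'s, followed by the chain-rule version of O'Connell's inequality together with the two finite-alphabet continuity facts (almost independence $\Rightarrow$ small mutual information, and $\delta$-closeness of joint laws $\Rightarrow$ closeness of mutual informations). The only cosmetic differences are that you work with a general finitary observable rather than indicators $\indicator{C}$ and make the per-step errors summable ($2^{-i}$) instead of fixing uniform $\gamma,\beta$ and taking a second limit at the end.
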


\subsection{General result: proof}
\label{sec:non-iid:general:proof}

In this section, we prove Theorem~\ref{thm:main:non-iid}.
The proof follows the same pattern as the one for Theorem~\ref{thm:main:iid}, but relies on an approximate version of O'Connell's inequality for almost independent random variables.
In addition, we need two standard lemmas that, for discrete random variables, translate almost independence and closeness of distributions into statements about mutual information.

\begin{lemma}[Almost independence in terms of mutual information]
\label{lem:almost-independence:equivalence}
    Let $\mathcal{X}$ and $\mathcal{Y}$ be finite sets.
    For every $\gamma>0$, there exists an $\varepsilon>0$ such that whenever two random variables $X$ and $Y$, taking values in $\mathcal{X}$ and $\mathcal{Y}$, are $\varepsilon$-dependent (in the sense of Definition~\ref{def:almost-independence}), we have $I(X:Y)<\gamma$.
\end{lemma}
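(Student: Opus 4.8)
The plan is to reduce the statement to a compactness argument on the finite-dimensional simplex of joint distributions on $\mathcal{X}\times\mathcal{Y}$. For a probability measure $\mu$ on $\mathcal{X}\times\mathcal{Y}$ with first and second marginals $\mu_X$ and $\mu_Y$, write $I(\mu)\isdef H(\mu_X)+H(\mu_Y)-H(\mu)$ for the associated mutual information; the goal is then to bound $I(\mu)$ from above when $\mu$ is the joint law of an $\varepsilon$-dependent pair $(X,Y)$.

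First I would translate $\varepsilon$-dependence into closeness in total variation. Writing $p(x,y)\isdef\PP(X=x,Y=y)$ and $p(x),p(y)$ for the marginals, Definition~\ref{def:almost-independence} applied to the singletons $A=\{x\}$ and $B=\{y\}$ gives $\abs{p(x,y)-p(x)p(y)}<\varepsilon$ for every $(x,y)$. Summing over the at most $\abs{\mathcal{X}}\cdot\abs{\mathcal{Y}}$ cells contained in a set $C\subseteq\mathcal{X}\times\mathcal{Y}$, one obtains $\abs{\PP((X,Y)\in C)-(\mu_X\otimes\mu_Y)(C)}\leq\abs{\mathcal{X}}\cdot\abs{\mathcal{Y}}\cdot\varepsilon$; that is, the joint law $\mu$ of $(X,Y)$ satisfies $\norm{\mu-\mu_X\otimes\mu_Y}_\TV\leq\abs{\mathcal{X}}\cdot\abs{\mathcal{Y}}\cdot\varepsilon$. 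Hence it suffices to find, for each $\gamma>0$, some $\eta>0$ such that $\norm{\mu-\mu_X\otimes\mu_Y}_\TV<\eta$ forces $I(\mu)<\gamma$; one then takes $\varepsilon\isdef\eta/(\abs{\mathcal{X}}\cdot\abs{\mathcal{Y}})$.

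For this remaining implication I would argue by compactness. Let $\Delta$ denote the set of probability measures on $\mathcal{X}\times\mathcal{Y}$, a compact subset of Euclidean space. Shannon entropy is continuous on the probability simplex of a finite set (with the convention $0\log 0=0$), and taking marginals is linear, so $\mu\mapsto I(\mu)$ is continuous on $\Delta$; likewise $\mu\mapsto\mu_X\otimes\mu_Y$ is continuous, hence so is $\mu\mapsto\norm{\mu-\mu_X\otimes\mu_Y}_\TV$. Moreover $I(\mu)=0$ precisely when $\mu=\mu_X\otimes\mu_Y$, that is, exactly on the zero set of the second map. If the implication failed for some $\gamma>0$, there would be a sequence $\mu_n\in\Delta$ with $\norm{\mu_n-(\mu_n)_X\otimes(\mu_n)_Y}_\TV\to 0$ but $I(\mu_n)\geq\gamma$; passing to a convergent subsequence $\mu_{n_j}\to\mu^*$, continuity of the second map yields $\mu^*=\mu^*_X\otimes\mu^*_Y$, so $I(\mu^*)=0$, while continuity of $I$ yields $I(\mu_{n_j})\to I(\mu^*)=0$, contradicting $I(\mu_{n_j})\geq\gamma$. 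This produces the required $\eta$, and hence $\varepsilon$, depending only on $\gamma$, $\abs{\mathcal{X}}$ and $\abs{\mathcal{Y}}$.

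The main subtlety — and the reason finiteness of $\mathcal{X}$ and $\mathcal{Y}$ cannot be dropped — is that the inequality runs in the \emph{reverse Pinsker} direction, from smallness of a total variation distance to smallness of a Kullback--Leibler divergence (indeed $I(\mu)=D(\mu\,\|\,\mu_X\otimes\mu_Y)$), which is false for general alphabets. Finiteness enters twice: in the elementary cell-by-cell bound $\norm{\cdot}_\TV\leq\abs{\mathcal{X}}\cdot\abs{\mathcal{Y}}\cdot\varepsilon$, and, more essentially, in guaranteeing that $I$ is a continuous function of the joint law on a compact domain, which is exactly what makes the resulting $\varepsilon$ uniform over all distributions of $X$ and~$Y$. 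A quantitative route via an explicit reverse Pinsker inequality is also possible but less clean, since such bounds involve the minimal mass of the reference measure $\mu_X\otimes\mu_Y$ and thus force one to restrict to supports and to accept distribution-dependent constants; the compactness argument sidesteps this entirely.
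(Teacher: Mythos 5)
Your argument is correct and is essentially the paper's: the paper derives this lemma by applying Lemma~\ref{lem:mutual-information:continuity} to an independent pair $(X',Y')$ with the same marginals as $(X,Y)$, which is exactly your reduction to total-variation closeness of the joint law to the product of its marginals followed by continuity of mutual information on the compact simplex. The only difference is that you spell out the continuity/compactness step (which the paper treats as standard) and make the rectangles-to-arbitrary-sets passage via the cell-by-cell bound explicit.
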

\noindent Lemma~\ref{lem:almost-independence:equivalence} follows from Lemma~\ref{lem:mutual-information:continuity} (see below) if we chose $X'$ and $Y'$ to be independent but have the same marginal distributions as $X$ and $Y$.  To get a concrete bound, we can use the ``reverse Pinsker inequality'' of~Györfi~\cite{CT2006a,SV2015}.  The statement of Lemma~\ref{lem:almost-independence:equivalence} fails for random variables with non-finite range.  The converse is however true in general, thanks to Pinsker's inequality (see \ac{eg}, the book of Cover and Thomas~\cite[Lem.~11.6.1]{CT2006}): small mutual information implies almost independence.

\begin{lemma}[Uniform continuity of mutual information]
\label{lem:mutual-information:continuity}
    Let $\mathcal{X}$ and $\mathcal{Y}$ be finite sets.
    For every $\beta>0$, there exists a $\delta>0$ such that whenever the distributions of two pairs $(X,Y)$ and $(X',Y')$ of random variables, taking values in $\mathcal{X}\times\mathcal{Y}$, are $\delta$-close (in the sense of Definition~\ref{def:closeness-of-distributions}), we have $\abs[\big]{I(X':Y')-I(X:Y)}<\beta$.
\end{lemma}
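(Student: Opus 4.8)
The plan is to reduce the statement to the uniform continuity of the Shannon entropy functional on the (compact) simplex of probability distributions over a finite set, and then deduce that from the uniform continuity of the single-variable function $\eta(t)\isdef -t\log t$ on $[0,1]$.

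First I would write, for any pair $(X,Y)$ with values in $\mathcal{X}\times\mathcal{Y}$, the identity $I(X:Y)=H(X)+H(Y)-H(X,Y)$, noting that the marginal laws of $X$ and of $Y$ are images of the joint law under measurable maps. By the remark following Definition~\ref{def:closeness-of-distributions}, applying a measurable map cannot increase total variation distance; hence if the laws of $(X,Y)$ and $(X',Y')$ are $\delta$-close, then so are the laws of $X,X'$ and of $Y,Y'$. It therefore suffices to prove the following one-variable claim: for every finite set $\mathcal{Z}$ and every $\beta'>0$ there is a $\delta'>0$ such that any two probability distributions $\mu,\nu$ on $\mathcal{Z}$ with $\norm{\mu-\nu}_\TV<\delta'$ satisfy $\abs{H(\mu)-H(\nu)}<\beta'$. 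Applying this with $\mathcal{Z}$ equal to $\mathcal{X}$, $\mathcal{Y}$, and $\mathcal{X}\times\mathcal{Y}$, each time with $\beta'=\beta/3$, and taking $\delta$ to be the minimum of the three resulting thresholds, gives $\abs{I(X':Y')-I(X:Y)}\leq\abs{H(X')-H(X)}+\abs{H(Y')-H(Y)}+\abs{H(X',Y')-H(X,Y)}<\beta$.

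For the one-variable claim, I would use that on the finite set $\mathcal{Z}$ the total variation distance controls each coordinate, namely $\abs{\mu(z)-\nu(z)}\leq\norm{\mu-\nu}_\TV$ for every $z\in\mathcal{Z}$ (take the singleton $\{z\}$ as the test set). Since $H(\mu)=\sum_{z\in\mathcal{Z}}\eta(\mu(z))$ with $\eta(t)=-t\log t$, and $\eta$ is continuous on the compact interval $[0,1]$ (with the convention $\eta(0)=0$), it is uniformly continuous there. Thus, given $\beta'>0$, one picks $\rho>0$ with $\abs{\eta(s)-\eta(t)}<\beta'/\abs{\mathcal{Z}}$ whenever $\abs{s-t}<\rho$, and sets $\delta'\isdef\rho$; then $\norm{\mu-\nu}_\TV<\delta'$ forces $\abs{\mu(z)-\nu(z)}<\rho$ for each $z$, and so $\abs{H(\mu)-H(\nu)}\leq\sum_{z}\abs{\eta(\mu(z))-\eta(\nu(z))}<\beta'$.

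There is no real obstacle here — the whole statement is a compactness argument, the finiteness of $\mathcal{X}$ and $\mathcal{Y}$ being essential — and the only point requiring a little care is the behaviour of $t\log t$ near $t=0$, which is handled by working with $\eta$ on the closed interval $[0,1]$. If an explicit modulus of continuity were desired instead of an abstract argument, one could substitute a quantitative continuity bound for entropy (of the type $\abs{H(\mu)-H(\nu)}\leq\norm{\mu-\nu}_\TV\log(\abs{\mathcal{Z}}-1)+\eta(\norm{\mu-\nu}_\TV)$), but this refinement is not needed for the statement as phrased.
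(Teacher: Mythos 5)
Your proof is correct. The paper does not actually spell out a proof of this lemma---it merely remarks that the statement ``simply expresses the uniform continuity of $I(X:Y)$ on the joint distribution''---and your argument (decomposing $I(X:Y)=H(X)+H(Y)-H(X,Y)$, passing total-variation closeness to the marginals via pushforward, and invoking the uniform continuity of $t\mapsto -t\log t$ on $[0,1]$ coordinatewise) is exactly the standard way of making that remark rigorous, with the finiteness of $\mathcal{X}$ and $\mathcal{Y}$ used where it must be.
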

\noindent Lemma~\ref{lem:mutual-information:continuity} simply expresses the uniform continuity of $I(X:Y)$ on the joint distribution of $X$ and~$Y$.  The statement does not hold for random variables with non-finite range, but mutual information remains lower semi-continuous on the joint distribution.

\begin{lemma}[O'Connell's inequality for almost independent RVs]
\label{lem:OConnell:approximate}
    Let $\gamma>0$.
    Let $V,W_1,W_2,\ldots,W_n$ be discrete random variables and suppose that $I(W_i:(W_1,W_2,\ldots,W_{i-1}))<\gamma$ for $i=2,3,\ldots,n$.  Then
	\begin{align}
		H(V) &> \sum_{i=1}^{n} I(V:W_i) - n\gamma
	\end{align}
\end{lemma}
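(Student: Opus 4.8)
The plan is to imitate the proof of the exact inequality (Lemma~\ref{lem:OConnell}) almost verbatim, inserting an error of at most $\gamma$ at the single place where the independence of the $W_i$ was used.  Recall that the exact argument rests on three facts: $H(V)\geq I\big(V:(W_1,W_2,\ldots,W_n)\big)$, since mutual information never exceeds the entropy of either of its arguments; the chain rule $I\big(V:(W_1,\ldots,W_n)\big)=\sum_{i=1}^n I\big(V:W_i\given W_1,\ldots,W_{i-1}\big)$; and the termwise comparison $I\big(V:W_i\given W_{<i}\big)\geq I(V:W_i)$, which in the exact setting holds because $H(W_i\given W_{<i})=H(W_i)$ by independence while $H(W_i\given V,W_{<i})\leq H(W_i\given V)$.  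The first two facts use nothing about the joint law of the $W_i$, so they survive unchanged; only the last one needs to be revisited.

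So, after recording $H(V)\geq\sum_{i=1}^n I\big(V:W_i\given W_1,\ldots,W_{i-1}\big)$, I would prove the estimate, valid for each $i\in\{1,\ldots,n\}$ (write $W_{<i}\isdef(W_1,\ldots,W_{i-1})$),
\begin{align}
    I\big(V:W_i\given W_{<i}\big)
        = H\big(W_i\given W_{<i}\big) - H\big(W_i\given V,W_{<i}\big)
        > I(V:W_i) - \gamma \;.
\end{align}
For the first entropy, $H\big(W_i\given W_{<i}\big) = H(W_i) - I\big(W_i:W_{<i}\big) > H(W_i) - \gamma$, which is precisely where the hypothesis on $W_i$ enters (for $i=1$ there is no conditioning and the term is simply $I(V:W_1) > I(V:W_1)-\gamma$ since $\gamma>0$).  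For the second, ``conditioning reduces entropy'' gives $H\big(W_i\given V,W_{<i}\big) \leq H(W_i\given V)$.  Subtracting, $I\big(V:W_i\given W_{<i}\big) > H(W_i) - \gamma - H(W_i\given V) = I(V:W_i) - \gamma$.  Summing over $i=1,\ldots,n$ then yields $H(V) \geq \sum_{i=1}^n I\big(V:W_i\given W_{<i}\big) > \sum_{i=1}^n I(V:W_i) - n\gamma$, which is the claim; the inequality is strict because each of the $n$ summands is strictly larger than $I(V:W_i)-\gamma$.

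I do not expect a genuine obstacle here: the argument is a short bookkeeping of entropy identities.  The only points needing mild care are the validity of the chain rule for mutual information and of the identity $H(W_i\given W_{<i}) = H(W_i) - I(W_i:W_{<i})$ should an entropy be infinite; in the intended application $V$ and the $W_i$ are finitary observables, so all entropies are finite, and the infinite case is in any event immediate since the right-hand side then remains finite while $H(V)=\infty$.
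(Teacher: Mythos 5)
Your proof is correct and follows essentially the same route as the paper's: both start from $H(V)\geq I\big(V:(W_1,\ldots,W_n)\big)$, apply the chain rule, and then bound each conditional term $I\big(V:W_i\mid (W_1,\ldots,W_{i-1})\big)$ from below by $I(V:W_i)-\gamma$ using the hypothesis. Your entropy-level expansion of that last step is just a rewriting of the paper's second application of the chain rule plus data processing, since $H\big(W_i\mid V,(W_1,\ldots,W_{i-1})\big)\leq H(W_i\mid V)$ is the same inequality as $I\big(W_i:(V,W_1,\ldots,W_{i-1})\big)\geq I(W_i:V)$.
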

\begin{proof}
    We have
    \begin{align}
        H(V) 
        &\geq I\big(V : (W_1,W_2,\ldots,W_n)\big) \\
        &=
            \sum_{i=1}^n I\big(V: W_i\given[\big] (W_1,W_2,\ldots,W_{i-1})\big)
            && \text{(chain rule)} \\
        &=
            I(V:W_1) + \begin{multlined}[t][0.35\linewidth]
                \sum_{i=2}^n\Big[I\big(W_i:(V,W_1,\ldots,W_{i-1})\big) \\
                    - I\big(W_i:(W_1,W_2,\ldots,W_{i-1})\big)\Big]
            \end{multlined}
            && \text{(chain rule)} \\
        &\geq
            I(V:W_1) + \sum_{i=2}^n\big[I(W_i:V) - \gamma\big]
            && \text{(data-processing + hypothesis)} \\
        &=
            \sum_{i=1}^n I(W_i:V) - (n-1)\gamma \;,
    \end{align}
    which gives the result.
\end{proof}
    

\begin{proof}[Proof of Theorem~\ref{thm:main:non-iid}]
    Let $Y\isdef\indicator{E}(\config{X})$.
    As in the proof of Theorem~\ref{thm:main:iid}, we show that $Y$ is independent of $\config{X}_K$ for every finite $K\subseteq\Gamma$.  The result would then follow as in Section~\ref{sec:intro:oconnel}.

    So, let $K\subseteq\Gamma$ be finite.
    Let $C\subseteq M^K$ be a measurable set and $\gamma,\beta>0$ be arbitrary.
    \begin{claim*}
        The event $E$ has a sequence $\pi_1,\pi_2,\ldots$ of positional symmetries
        such that
        \begin{enumerate}[label={(\roman*)}]
            \item \label{item:non-iid:claim:almost-independence}
                $I\big(\indicator{C}(\config{X}^{\pi_n}_K):\big(\indicator{C}(\config{X}^{\pi_1}_K),\indicator{C}(\config{X}^{\pi_2}_K),\ldots,\indicator{C}(\config{X}^{\pi_{n-1}}_K)\big)\big)<\gamma$ for each $n=2,3,\ldots$.
            \item \label{item:non-iid:claim:closeness}
                $\abs[\Big]{I\big(Y:\indicator{C}(\config{X}^{\pi_n}_K)\big) - I\big(Y:\indicator{C}(\config{X}_K)\big)}<\beta$ for each $n=1,2,\ldots$.
        \end{enumerate}
    \end{claim*}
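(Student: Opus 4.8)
The plan is to construct $\pi_1,\pi_2,\ldots$ recursively, invoking the hypothesis of Theorem~\ref{thm:main:non-iid} once at each stage for a carefully chosen finite set~$J$, finitary observable~$\varphi$, and tolerances $\varepsilon,\delta>0$. Throughout, abbreviate $V_0\isdef\indicator{C}(\config{X}_K)$ and, once $\pi_i$ has been fixed, $V_i\isdef\indicator{C}(\config{X}^{\pi_i}_K)$; each $V_i$ is $\{0,1\}$-valued, so every entropy and mutual information below involves only discrete random variables with small, explicit ranges. Set $\pi_0\isdef\mathrm{id}_\Gamma$, which makes $V_0$ fit the same pattern.

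At stage~$n$, with $\pi_1,\ldots,\pi_{n-1}$ already in hand, I would take $J_n\isdef\bigcup_{i=0}^{n-1}\pi_i(K)$, a finite set containing~$K$, and define the finitary observable $\varphi_n\colon M^{J_n}\to\{0,1\}^{n}$ by
\[
    \varphi_n(\config{b})\isdef\Big(\indicator{C}(\config{b}_K),\;\indicator{C}\big((b_{\pi_1(k)})_{k\in K}\big),\;\ldots,\;\indicator{C}\big((b_{\pi_{n-1}(k)})_{k\in K}\big)\Big)\;.
\]
The point of this choice is twofold: $\varphi_n(\config{X}_{J_n})=(V_0,V_1,\ldots,V_{n-1})$ records exactly the ``past'' indicators, while, since $K\subseteq J_n$, the first coordinate of $\varphi_n(\config{X}^{\pi}_{J_n})$ equals $\indicator{C}(\config{X}^{\pi}_K)$ for \emph{any} positional symmetry $\pi$ one later substitutes, so it will be $V_n$ once $\pi_n$ is chosen. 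I would then use Lemma~\ref{lem:almost-independence:equivalence} (applied with the finite ranges $\{0,1\}$ and $\{0,1\}^{n}$) to pick $\varepsilon_n>0$ so small that $\varepsilon_n$-dependence forces mutual information below~$\gamma$, and Lemma~\ref{lem:mutual-information:continuity} (applied with the finite range $\{0,1\}^2$) to pick $\delta_n>0$ so small that $\delta_n$-close distributions have mutual informations differing by less than~$\beta$. Applying the hypothesis of Theorem~\ref{thm:main:non-iid} with the data $(J_n,\varphi_n,\varepsilon_n,\delta_n)$ then yields a positional symmetry $\pi_n$ of~$E$ for which $\varphi_n(\config{X}_{J_n})$ and $\varphi_n(\config{X}^{\pi_n}_{J_n})$ are $\varepsilon_n$-dependent and the laws of $\config{X}$ and $\config{X}^{\pi_n}$ are $\delta_n$-close.

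It then remains to read off (i) and (ii). For (i): $\varepsilon$-dependence is preserved under applying a measurable map to each side (the remark after Definition~\ref{def:almost-independence}), so taking the identity on the left and the first-coordinate projection on the right shows that $(V_0,V_1,\ldots,V_{n-1})$ and $V_n$ are $\varepsilon_n$-dependent; the choice of $\varepsilon_n$ gives $I\big(V_n:(V_0,\ldots,V_{n-1})\big)<\gamma$, hence $I\big(V_n:(V_1,\ldots,V_{n-1})\big)<\gamma$ by the data-processing inequality. For (ii): because $\pi_n$ is a positional symmetry of~$E$, we have $\indicator{E}(\config{X}^{\pi_n})=\indicator{E}(\config{X})=Y$ \emph{as random variables}; hence, with $g(\config{a})\isdef\big(\indicator{E}(\config{a}),\indicator{C}(\config{a}_K)\big)$, we get $(Y,V_0)=g(\config{X})$ and $(Y,V_n)=g(\config{X}^{\pi_n})$. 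Since the laws of $\config{X}$ and $\config{X}^{\pi_n}$ are $\delta_n$-close and closeness is preserved under pushforward by $g$ (the remark after Definition~\ref{def:closeness-of-distributions}), the laws of $(Y,V_0)$ and $(Y,V_n)$ are $\delta_n$-close, and Lemma~\ref{lem:mutual-information:continuity} together with the choice of $\delta_n$ gives $\big|I(Y:V_n)-I(Y:V_0)\big|<\beta$. Iterating over~$n$ produces the whole sequence and establishes the claim.

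I do not expect a genuine obstacle here: this is the argument of Theorem~\ref{thm:main:iid} with exact independence and equidistribution replaced by their quantitative surrogates. The two points that need care are (a) bundling the already-chosen ``past'' indicators \emph{together with} the not-yet-chosen ``current'' one into a single observable $\varphi_n$, which is exactly why the identity map $\pi_0$ is folded into $J_n$, and (b) exploiting that a positional symmetry of~$E$ leaves $\indicator{E}(\config{X})$ unchanged pointwise, not merely in distribution. Once the claim is available, the proof of Theorem~\ref{thm:main:non-iid} concludes by feeding $V\isdef Y$ and $W_i\isdef V_i$ into the approximate O'Connell inequality (Lemma~\ref{lem:OConnell:approximate})---whose hypothesis is exactly part (i) of the claim---to obtain, via part (ii), $H(Y)>n\big(I(Y:V_0)-\beta-\gamma\big)$, then letting $n\to\infty$ to force $I(Y:V_0)\le\beta+\gamma$, sending $\beta,\gamma\to0$ to get $I\big(Y:\indicator{C}(\config{X}_K)\big)=0$, and finally varying $C$ and $K$ and invoking Lemma~\ref{lem:independence} as in Section~\ref{sec:intro:oconnel}.
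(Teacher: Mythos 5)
Your proposal is correct and follows essentially the same recursive construction as the paper: the same bundled observable recording the past indicators, the same use of Lemmas~\ref{lem:almost-independence:equivalence} and~\ref{lem:mutual-information:continuity} to convert $\varepsilon$-dependence and $\delta$-closeness into bounds on mutual information, and the same use of $\indicator{E}(\config{X}^{\pi_n})=\indicator{E}(\config{X})$ for part~(ii). The only cosmetic difference is that the paper takes $\pi_1=\mathrm{id}$ (so that $V_0$ is already the first coordinate of the observable and no data-processing step is needed to discard it), whereas you keep an auxiliary $\pi_0=\mathrm{id}$ and choose $\pi_1$ from the hypothesis; both work.
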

    \begin{claimproof}
        Applying Lemma~\ref{lem:mutual-information:continuity} with $\mathcal{X}=\mathcal{Y}=\{0,1\}$, we get a value $\delta>0$ corresponding to~$\beta$.
        We choose $\pi_1,\pi_2,\ldots$ recursively.
        \begin{itemize}
            \item Let $\pi_1$ be the identity map on~$\Gamma$ so that~\ref{item:non-iid:claim:closeness} is trivially satisfied for $n=1$.
            \item For $n\geq 2$, suppose that we have already chosen $\pi_1,\pi_2,\ldots,\pi_{n-1}$.
            Applying Lemma~\ref{lem:almost-independence:equivalence} with $\mathcal{X}=\{0,1\}$ and $\mathcal{Y}=\{0,1\}^{n-1}$, we obtain a value $\varepsilon>0$ corresponding to~$\gamma$.
            Set $J\isdef\bigcup_{i=1}^{n-1}\pi_i(K)$ and consider the finitary observable $\varphi\colon M^J\to\{0,1\}^{n-1}$ defined by
            \begin{align}
                \varphi(\config{a}_J) &\isdef \big(\indicator{C}(\config{a}^{\pi_1}_K),\indicator{C}(\config{a}^{\pi_2}_K),\ldots,\indicator{C}(\config{a}^{\pi_{n-1}}_K)\big) \;.
            \end{align}
            Let $\pi_n$ be a positional symmetry of $E$ such that $\varphi(\config{X}_J)$ and $\varphi(\config{X}^{\pi_n}_J)$ are $\varepsilon$-dependent and the distributions of $\config{X}$ and $\config{X}^{\pi_n}$ are $\delta$-close.

            First, observe that $S\isdef\indicator{C}(\config{X}^{\pi_n}_K)$ and $T\isdef\big(\indicator{C}(\config{X}^{\pi_1}_K),\indicator{C}(\config{X}^{\pi_2}_K),\ldots,\indicator{C}(\config{X}^{\pi_{n-1}}_K)\big)$ are measurable functions of $\varphi(\config{X}^{\pi_n}_J)$ and $\varphi(\config{X}_J)$, respectively.
            Since $\varphi(\config{X}_J)$ and $\varphi(\config{X}^{\pi_n}_J)$ are $\varepsilon$-dependent,
            so are $S$ and~$T$ (see the comment after Definition~\ref{def:almost-independence}).
            It follows that $I(S:T)<\gamma$,
            hence~\ref{item:non-iid:claim:almost-independence} is satisfied.

            Next, note that since the distributions of $\config{X}$ and $\config{X}^{\pi_n}$ are $\delta$-close, so are the joint distributions of the pairs $\big(\indicator{E}(\config{X}),\indicator{C}(\config{X}_K)\big)$ and $\big(\indicator{E}(\config{X}^{\pi_n}),\indicator{C}(\config{X}^{\pi_n}_K)\big)$ (see the comment after Definition~\ref{def:closeness-of-distributions}).  Hence,
            \useshortskip
            \begin{align}
                \abs[\Big]{I\big(\indicator{E}(\config{X}^{\pi_n}):\indicator{C}(\config{X}^{\pi_n}_K)\big) - I\big(\indicator{E}(\config{X}):\indicator{C}(\config{X}_K)\big)} &< \beta \;.
            \end{align}
            But $\pi_n$ is a positional symmetry of $E$, hence $\indicator{E}(\config{X}^{\pi_n})=\indicator{E}(\config{X})$.  Therefore,
            \begin{align}
                \abs[\Big]{I\big(\indicator{E}(\config{X}):\indicator{C}(\config{X}^{\pi_n}_K)\big) - I\big(\indicator{E}(\config{X}):\indicator{C}(\config{X}_K)\big)} &< \beta \;,
            \end{align}
            which means~\ref{item:non-iid:claim:closeness} is satisfied, too.
        \end{itemize}
        Thus, the claim holds.
    \end{claimproof}

    Now, by O'Connell's inequality for almost independent random variables (Lemma~\ref{lem:OConnell:approximate}), for every $n$ we have
    \begin{align}
        H(Y) &\geq \sum_{i=1}^n I\big(Y:\indicator{C}(\config{X}^{\pi_i}_K)\big) - n\gamma
            > n I\big(Y:\indicator{C}(\config{X}_K\big) - n\gamma -n\beta \;.
    \end{align}
    Since $H(Y)$ is finite and $n$ is arbitrary, we obtain $I\big(Y:\indicator{C}(\config{X}_K)\big)<\gamma+\beta$.
    But $\gamma>0$ and $\beta>0$ are also arbitrary, thus $I\big(Y:\indicator{C}(\config{X}_K)\big)=0$.
    Therefore, $Y$ is independent of~$\indicator{C}(\config{X}_K)$.
    Lastly, since $C$ is arbitrary, we find that $Y$ is independent of $\config{X}_K$, as claimed.
    This concludes the proof of the theorem.
\end{proof}

\subsection{Independent but not identically distributed RVs}
\label{sec:non-iid:independent}

\begin{proof}[Proof of Theorem~\ref{thm:main:independent}]
    We verify that the hypothesis of Theorem~\ref{thm:main:non-iid} is satisfied.
    Let $J\subseteq\Gamma$ be a finite set, $\varphi\colon M^J\to\Sigma$ a finitary observable, and $\varepsilon,\delta>0$.
    By assumption, there exists an injective positional symmetry $\pi\colon\Gamma\to\Gamma$ such that $\pi(J)\cap J=\varnothing$ and $\sum_{k\in\Gamma}\norm{p_{\pi(k)}-p_k}_\TV<\delta$.
    The independence of $X_k$'s along with the fact that $\pi(J)$ and $J$ are disjoint ensure that $\config{X}_J$ and $\config{X}^\pi_J$ are independent, thus condition~\ref{item:thm:non-iid:almost-independence} is satisfied.
    To see that condition~\ref{item:thm:non-iid:closeness} is also satisfied, first note that, since $\pi$ is injective, the family $\config{X}^\pi=(X_{\pi(k)})_{k\in\Gamma}$ is also an \ac{iid} family.
    The claim thus follows once we note that whenever $\mu\isdef\bigotimes_{k\in\Gamma}p_k$ and $\nu\isdef\bigotimes_{k\in\Gamma}q_k$ are two product measures on the same product space, we have
    \begin{align}
        \norm{\mu-\nu}_\TV &\leq \sum_{k\in\Gamma}\norm{q_k-p_k}_\TV \;.
    \end{align}
    To establish the latter inequality, construct a coupling $(\config{V},\config{W})$ of $\mu$ and $\nu$ such that
    \begin{enumerate*}[label={(\arabic*)}]
        \item for each $k\in\Gamma$, the pair $(V_k,W_k)$ is an optimal coupling of $p_k$ and $q_k$, and
        \item the pairs $(V_k,W_k)$ are independent of one another.
    \end{enumerate*}
    Then, by the coupling inequality and the union bound, we have
    \begin{align}
        \norm{\mu-\nu}_\TV &\leq \PP(\config{V}\neq\config{W})
            \leq \sum_{k\in\Gamma} \PP(V_k\neq W_k) = \sum_{k\in\Gamma}\norm{q_k-p_k}_\TV \;.
    \end{align}
    (See the book by Lindvall~\cite{Lindvall2002} for details.)
    This concludes the proof.
\end{proof}

\begin{remark}[Theorem~\ref{thm:main:independent} is not sharp]
\label{rem:main:independent:not-necessary}
    In Theorem~\ref{thm:main:independent}, the condition on $p_k$'s is not necessary in order for $E$ to be deterministic.
    To see this, note that if $\mu\isdef\bigotimes_{k\in\Gamma}p_k$ and $\nu\isdef\bigotimes_{k\in\Gamma}q_k$ are two product measures on $M^\Gamma$ such that $q_k=p_k$ for all but finitely many $k$'s and $q_k\ll p_k$ for the rest, then $\nu\ll\mu$.  In particular, $\nu(E)=0$ (resp.~$\nu(E)=1$) whenever $\mu(E)=0$ (resp.~$\mu(E)=1$).
    However, it is very well possible that the family $(p_k)_{k\in E}$ satisfies the hypothesis of Theorem~\ref{thm:main:independent} while $(q_k)_{k\in E}$ does not.
    For instance, consider the scenario in which
    \begin{itemize}
        \item $p_k=p$ for all $k$,
        \item $q_k=p$ for all $k\neq k_0$ and $q_{k_0}=p'$,
    \end{itemize}
    where $k_0$ is an arbitrary element of~$\Gamma$, and $p,p'$ are distinct measures on~$M$ such that $p'\ll p$.
    Then, $\sum_{k\in\Gamma}\norm{q_{\pi(k)}-q_k}_\TV$ is bounded away from zero for all maps $\pi\colon\Gamma\to\Gamma$ such that $\pi(k_0)\neq k_0$.  Hence, the hypothesis fails for $J\isdef\{k_0\}$.  
\end{remark}

\subsection{Asymptotically independent RVs}
\label{sec:non-iid:k-mixing}
In this section, we prove Theorem~\ref{thm:main:k-mixing} as a corollary of Theorem~\ref{thm:main:non-iid}.  But before doing so, let us recall the definition of the Kolmogorov mixing property.

An event $V\subseteq M^\Gamma$ is called a \emph{cylinder} event if it is measurably determined by the coordinates inside a finite set~$J\subseteq\Gamma$, that is, $V=\{\config{a}\in M^\Gamma: \config{a}_J\in V_0\}$ for some measurable set $V_0\subseteq M^J$.

\begin{definition}[Kolmogorov mixing property]
    A countable family $\config{X}=(X_n)_{n\in\Gamma}$ of random variables with values from a measurable space $M$ is said to have the \emph{Kolmogorov mixing property} (or has \emph{short-range correlations}) if for every cylinder event $V\subseteq M^\Gamma$ and every $\varepsilon>0$, there exists a finite set $K\subseteq\Gamma$ such that
    \useshortskip
    \begin{align}
        \sup_W\abs[\big]{\PP(\config{X}\in V\cap W) - \PP(\config{X}\in V)\PP(\config{X}\in W)} &< \varepsilon \;,
    \end{align}
    where the supremum is over all sets $W\subseteq M^\Gamma$ that are measurably determined by the coordinates outside~$K$. 
\end{definition}
\noindent The families with the Kolmogorov mixing property are precisely those for which every tail event is deterministic~\cite[Prop.~7.9]{Georgii2011}.
They include
\begin{enumerate}[label={(\alph*)}]
    \item Every recurrent Markov chain $(X_n)_{n\geq 0}$ with a countable state space that either has a deterministic initial state~$X_0$ or is irreducible and aperiodic~\cite{BF1964}.
    \item 
        Every Gibbs random field 
        that has an extremal distribution~\cite[Thm.~7.7]{Georgii2011}.
    \item Every determinantal Bernoulli process~\cite{Lyons2003}, and more generally, every negatively associated Bernoulli process with summable covariances~\cite{ABZ2023}. 
    \item Every negatively associated Gaussian process~\cite{ABZ2023}.
\end{enumerate}

\begin{proof}[Proof of Theorem~\ref{thm:main:k-mixing}]
    We verify the hypothesis of Theorem~\ref{thm:main:non-iid}.
    Let $J\subseteq\Gamma$ be a finite set, $\varphi\colon M^J\to\Sigma$ a finitary observable, and $\varepsilon,\delta>0$.
    For each $A\subseteq\Sigma$, consider the cylinder set $V_A\isdef\{\config{x}\in M^\Gamma: \varphi(\config{x}_J)\in A\}$, and let $K_A\supseteq J$ be a finite set corresponding to $V_A$ and $\varepsilon$ as in the definition of the Kolmogorov mixing property.  Let $K\isdef\bigcup_{A\subseteq\Sigma}K_A$, and note that $K$ is again finite because $\Sigma$ is finite.
    
    By assumption, $E$ has a positional symmetry $\pi$ such that $\pi(K)\cap K=\varnothing$ and $\config{X}$ and $\config{X}^\pi$ have the same distribution.  This $\pi$ automatically satisfies condition~\ref{item:thm:non-iid:closeness} in Theorem~\ref{thm:main:non-iid}.  To see that condition~\ref{item:thm:non-iid:almost-independence} is also satisfied, note that $J\subseteq K$ and $\pi(J)\subseteq\Gamma\setminus K$.  Hence, for every $A,B\subseteq\Sigma$, we have
    \begin{align}
        \abs[\Big]{%
            \PP\big(\text{$\varphi(\config{X}_J)\in A$ and $\varphi(\config{X}^\pi_J)\in B$}\big)
            - \PP\big(\varphi(\config{X}_J)\in A\big)\PP\big(\varphi(\config{X}^\pi_J)\in B\big)
        } &< \varepsilon  
    \end{align}
    because 
    $\{\varphi(\config{X}^\pi_J)\in B\}$ is determined by the coordinates outside $K$, which are all outside $K_A$.
    Since $\Sigma$ is finite, we obtain that $\varphi(\config{X}_J)$ and $\varphi(\config{X}^\pi_J)$ are $\varepsilon$-dependent, hence condition~\ref{item:thm:non-iid:almost-independence} is satisfied.
    The result thus follows from Theorem~\ref{thm:main:non-iid}.
\end{proof}

\bibliographystyle{plainurl}
\bibliography{bibliography}

\end{document}